\newcommand{\jump}[1]{\llbracket #1 \rrbracket}
\journalname{}
\begin{document}

\title{A hybridizable discontinuous Galerkin method for the
  Navier--Stokes equations with pointwise divergence-free velocity
  field\thanks{SR gratefully acknowledges support from the Natural
    Sciences and Engineering Research Council of Canada through the
    Discovery Grant program (RGPIN-05606-2015) and the Discovery
    Accelerator Supplement (RGPAS-478018-2015).} }

\titlerunning{A HDG method for Navier--Stokes with pointwise divergence-free velocity}

\author{Sander Rhebergen \and Garth N.~Wells}


\institute{Sander Rhebergen \at
  Department of Applied Mathematics, University of Waterloo,
  Waterloo N2L~3G1, Canada \\
  \email{srheberg@uwaterloo.ca} \\
  ORCID:~0000-0001-6036-0356 \\
  \and
  Garth N.~Wells \at
  Department of Engineering, University of Cambridge,
  Trumpington Street, Cambridge CB2~1PZ, United Kingdom \\
  \email{gnw20@cam.ac.uk} \\
  ORCID:~0000-0001-5291-7951
}

\date{}

\maketitle

\begin{abstract}
  We introduce a hybridizable discontinuous Galerkin method for the
  incompressible Navier--Stokes equations for which the approximate
  velocity field is pointwise divergence-free. The method builds on
  the method presented by Labeur and Wells [SIAM J. Sci. Comput.,
    vol. 34 (2012), pp. A889--A913]. We show that with modifications
  of the function spaces in the method of Labeur and Wells it is
  possible to formulate a simple method with pointwise divergence-free
  velocity fields which is momentum conserving, energy stable, and
  pressure-robust. Theoretical results are supported by two- and
  three-dimensional numerical examples and for different orders of
  polynomial approximation.

  \keywords{Navier--Stokes equations \and hybridized methods \and
    discontinuous Galerkin \and finite element methods \and
    solenoidal}
\end{abstract}

\section{Introduction}
\label{sec:introduction}

Numerous finite element methods for the incompressible Navier--Stokes
equations result in approximate velocity fields that are not pointwise
divergence-free. This lack of pointwise satisfaction of the continuity
equation typically leads to violation of conservation laws beyond just
mass conservation, such as conservation of energy. A key issue is
that, in the absence of a pointwise solenoidal velocity field, the
conservative and advective format of the Navier--Stokes equations are
not equivalent.  The review paper by \citet{John:2017} presents cases
for the Stokes limit where the lack of pointwise enforcement of the
continuity equation can lead to large solution errors. Elements that
are stable (in sense of the inf-sup condition), but do not enforce the
continuity equation pointwise, such as the Taylor--Hood,
Crouzeix--Raviart, and MINI elements, can suffer from large errors in
the pressure, which in turn can pollute the velocity
approximation. The concept of `pressure-robustness' to explain the
aforementioned issues is discussed by \citet{John:2017}.  A second
issue is when a computed velocity field that is not pointwise
divergence-free is used as the advective velocity in a transport
solver. The lack of pointwise incompressibility can lead to spurious
results and can compromise stability of the transport equation.

Discontinuous Galerkin (DG) finite element methods provide a natural
framework for handling the advective term in the Navier--Stokes
equations, and have been studied extensively in this context,
e.g.~\citep{Bassi:2006, Cockburn:2005, Pietro:book, Ferrer:2011,
  Rhebergen:2013, Shahbazi:2007}. A difficulty in the construction of
DG methods for the Navier--Stokes equations is that it is not possible
to have both an energy-stable \emph{and} locally momentum conserving
method unless the approximate velocity is exactly
divergence-free~\citep[p.~1068]{Cockburn:2005}. To overcome this
problem, a post-processing operator was introduced by
\citet{Cockburn:2005}. The operator, which is a slight modification of
the Brezzi--Douglas--Marini interpolation operator (see
e.g.~\citep{Boffi:book}), applied to the DG approximate velocity field
generates a post-processed velocity that is pointwise
divergence-free. Key to the operator is that it can be applied
element-wise and is therefore inexpensive to apply. A second issue
with DG methods, and a common criticism, is that the number of
degrees-of-freedom on a given mesh is considerably larger than for a
conforming method. This is especially the case in three spatial
dimensions.

An approach to representing pointwise divergence-free velocity fields
is to use a $H({\rm div})$-conforming velocity field, in which the
normal component of the velocity is continuous across facets, together
with a discontinuous pressure field from an appropriate space.  Such a
velocity space can be constructed by using a $H({\rm div})$-conforming
finite element space, or by enforcing the desired continuity via
hybridization~\citep{Boffi:book}.  However, construction of $H({\rm
  div})$-conforming methods for the Navier--Stokes (and Stokes)
equations is not straightforward as the tangential components of the
viscous stress on cell facets must be appropriately handled. Moreover,
for advection dominated flows it is not immediately clear how the
advective terms can be appropriately stabilized.  Examples of
hybridization for the Stokes equations can be found
in~\citep{Carrero:2005, Cockburn:2005a, Cockburn:2005b}, and for the
Navier--Stokes equations in~\citep{Lehrenfeld:2016}.

A synthesis of discontinuous Galerkin and hybridized methods has lead
to the development of hybridizable Discontinuous Galerkin (HDG) finite
element methods~\citep{Cockburn:2009a,labeur:2007}. These methods were
introduced with the purpose of reducing the computational cost of DG
methods on a given mesh, while retaining the attractive conservation
and stability properties of DG methods.  This is achieved as follows.
The governing equations are posed cell-wise in terms of the
approximate fields on a cell and numerical fluxes, in which the latter
depends on traces of the approximate fields and fields that are
defined only on facets.  Fields defined on a cell are not coupled
directly to fields on neighboring cells, but `communicate' only via
the fields that are defined on facets.  By coupling degrees of freedom
on a cell only to degrees of freedom of the facet functions, cell
degrees of freedom can be eliminated in favor of facet degrees of
freedom only. The result is that the HDG global system of algebraic
equations is significantly smaller than those obtained using~DG.

It has been shown that, after post-processing, solutions obtained by
HDG methods may show super-convergence results for elliptic problems
(for polynomial approximations of order $k$, the order of accuracy is
order $k+2$ in the $L^{2}$-norm). This property has been exploited
also in the context of the Navier--Stokes equations by,
e.g.,~\citep{Cesmelioglu:2016, Nguyen:2011}. Although the velocity
field is not automatically pointwise divergence-free, a
post-processing is applied that results in an approximate velocity
field that is exactly divergence-free and $H({\rm div})$-conforming
and super-converges for low Reynolds number flows.  Super-convergence
is, however, lost when the flow is convection dominated.

We use the HDG approach to construct a simple discretization of the
Navier--Stokes equations in which the computed velocity field is
$H({\rm div})$-conforming and pointwise divergence-free. To achieve
this, we first note that unlike many other HDG methods for
incompressible flows~\citep{Cesmelioglu:2016, Cockburn:2009c,
  Cockburn:2011, Lehrenfeld:2016, Nguyen:2010, Nguyen:2011, Qiu:2016},
the HDG methods of \citet{Labeur:2012} and \citet{Rhebergen:2012}
involve facet unknowns for the pressure.  The pressure field on a cell
plays the role of cell-wise Lagrange multiplier to enforce the
continuity equation, whereas the facet pressure unknowns play the role
of Lagrange multipliers enforcing continuity of the normal component
of the velocity across cell boundaries~\citep{Rhebergen:2017}. It was
shown already in \citep{Labeur:2012} that if the polynomial
approximation of the element pressure on simplices is one order lower
than the polynomial approximation of the velocity that the approximate
velocity field is exactly divergence-free on cells.  However, the
method in \citep{Labeur:2012} could not simultaneously satisfy mass
conservation, momentum conservation and energy stability. This
shortcoming is due to the computed velocity field for the method in
\citep{Labeur:2012} not being $H({\rm div})$-conforming.  We note that
fast solvers for the Stokes part of the problem are developed and
analysed in~\citep{rhebergen:2018}.

In this paper we show that if the facet pressure space is chosen
appropriately, we obtain approximate velocity fields that are
$H({\rm div})$-conforming and pointwise divergence-free. We are guided
in this by the stability analysis in \citep{Rhebergen:2017} for the
Stokes problem, which provides guidance on the permissible function
spaces. The consequences of this modification of the method
of~\citep{Labeur:2012} are profound: the method proposed in this work
results in a scheme that is both mass and momentum conserving (locally
and globally), energy stable and pressure-robust. We summarize
properties of the proposed method and those of~\citep{Labeur:2012} in
\cref{tab:summary_properties}.

\begin{table}
  \caption{Summary of the properties of the method
    of~\citep{Labeur:2012} and the proposed method of this paper. The
    skew-symmetric and divergence forms refer to different
    formulations of the momentum equation.  In~\citep{Labeur:2012}
    both an equal- and mixed-order velocity-pressure approximation are
    introduced.}
  \label{tab:summary_properties}
  \centering {\small
    \begin{tabular}{c|cccc}
      \hline
      Formulation & mass & momentum & energy & pressure \\
      & conserving & conserving & stable & robust \\
      \hline
      Equal order~\citep{Labeur:2012} & $\times$     & $\checkmark$ & only in        & $\times$ \\
                                      &              &              & skew-symmetric &          \\
                                      &              &              & form           &          \\
      \hline
      Mixed order~\citep{Labeur:2012} & $\checkmark$ & only in      & only in        & $\times$ \\
                                      &              & divergence   & skew-symmetric &          \\
                                      &              & form         & form           &          \\
      \hline
      Proposed method                 & $\checkmark$ & $\checkmark$ & $\checkmark$   & $\checkmark$ \\
      \hline
    \end{tabular}
  }
\end{table}

The remainder of this paper is organized as
follows. \Cref{sec:navierstokes} briefly introduces the Navier--Stokes
problem, which is followed by the main result of this paper in
\cref{sec:hdg}; a momentum conserving and energy stable HDG method for
the Navier--Stokes equations with pointwise solenoidal and $H({\rm
  div})$-conforming velocity field. Numerical results are presented in
\cref{sec:compres} and conclusions are drawn in
\cref{sec:conclusions}.

\section{Incompressible Navier--Stokes problem}
\label{sec:navierstokes}

Let $\Omega \subset \mathbb{R}^d$ be a polygonal ($d = 2$) or
polyhedral ($d = 3$) domain with boundary outward unit normal $n$, and
let the time interval of interest be given by $I = (0,t_N]$. Given the
  kinematic viscosity $\nu \in \mathbb{R}^+$ and forcing term $f :
  \Omega \times I \to \mathbb{R}^d$, the Navier--Stokes equations for
  the velocity field $u : \Omega \times I \to \mathbb{R}^d$ and
  kinematic pressure field $p : \Omega \times I \to \mathbb{R}$ are
  given by
\begin{subequations}
  \label{eq:navsto}
  \begin{align}
    \label{eq:navsto_mom}
    \partial_t u + \nabla \cdot \sigma &= f && \mbox{in} \ \Omega \times I,
    \\
    \label{eq:navsto_mass}
    \nabla \cdot u &= 0 && \mbox{in} \ \Omega \times I,
  \end{align}
\end{subequations}
where $\sigma$ is the momentum flux:
\begin{equation}
  \label{eq:momentumflux}
  \sigma := \sigma_a +\sigma_d
  \quad \mbox{with} \quad
  \sigma_a := u \otimes u
  \quad \mbox{and} \quad
  \sigma_d := p \mathbb{I} - \nu \nabla u,
\end{equation}
and $\mathbb{I}$ is the identity tensor and $(a \otimes b)_{ij} = a_i
b_j$.

We partition the boundary of $\Omega$ such that $\partial \Omega =
\Gamma_D \cup \Gamma_N$ and $\Gamma_D \cap \Gamma_N = \emptyset$.
Given $h : \Gamma_N \times I \to \mathbb{R}^d$ and a solenoidal
initial velocity field $u_0 : \Omega \to \mathbb{R}^d$, we prescribe
the following boundary and initial conditions:
\begin{subequations}
  \label{eq:bc_ic}
  \begin{align}
    \label{eq:bc_dirichlet}
    u &= 0 && \mbox{on} \ \Gamma_D \times I,
    \\
    \label{eq:bc_neumann}
    \sigma \cdot n - \max\del{u \cdot n, 0}{u} &= h && \mbox{on}
    \ \Gamma_N\times I, \\
    \label{eq:ic}
    u(x, 0) &= u_0(x) && \mbox{in} \ \Omega.
  \end{align}
\end{subequations}
On inflow parts of $\Gamma_{N}$ ($u \cdot n < 0$) we impose the total
momentum flux, i.e., $\sigma \cdot n = h$. On outflow parts of
$\Gamma_N$ ($ u \cdot n \ge 0$), only the diffusive part of the
momentum flux is prescribed, i.e.,~$\sigma_d \cdot n = h$.

\Cref{eq:navsto_mom} is the conservative form of the Navier--Stokes
equation.  With satisfaction of the incompressibility constraint,
\cref{eq:navsto_mass}, the momentum equation \eqref{eq:navsto_mom} can
be equivalently expressed as:
\begin{equation}
  \label{eq:ns_mom_weighted}
  \partial_t u + (1 - \chi) u \cdot \nabla u + \chi \nabla \cdot
  \sigma_a + \nabla \cdot \sigma_d = f,
\end{equation}
where $\chi \in [0, 1]$.  For numerous finite element methods, the
approximate velocity field is not pointwise or locally (in a weak
sense) solenoidal.  In such cases, it can be shown that momentum is
conserved if $\chi = 1$, while energy stability can be proven if~$\chi
= 1/2$. For stabilized finite element methods in which the continuity
equation is not satisfied locally, manipulations of the advective term
can be applied to achieve momentum conservation~\citep{hughes:2005}.

The mass conserving (mixed-order) hybridizable discontinuous Galerkin
method of \citet{Labeur:2012} is based on a weak formulation of
\cref{eq:ns_mom_weighted}. It was proven to be locally momentum
conserving for $\chi = 1$ and energy stable for $\chi = 1/2$, but in
their analysis both properties could not be satisfied simultaneously.
We will prove how the method can be formulated such that mass and
momentum conservation, and energy stability can be satisfied
simultaneously, and the method be made invariant with respect
to~$\chi$.

\section{A hybridizable discontinuous Galerkin method}
\label{sec:hdg}

We present a hybridizable discontinuous Galerkin method for the
Navier--Stokes problem for which the approximate velocity field is
pointwise divergence-free.

\subsection{Preliminaries}

Let $\mathcal{T} := \cbr{K}$ be a triangulation of the domain $\Omega$
into non-overlapping simplex cells~$K$. The boundary of a cell is
denoted by $\partial K$ and the outward unit normal vector on
$\partial K$ by~$n$. Two adjacent cells $K^+$ and $K^-$ share an
interior facet $F := \partial K^+ \cap \partial K^-$. A facet of
$\partial K$ that lies on the boundary of the domain $\partial \Omega$
is called a boundary facet. The sets of interior and boundary facets
are denoted by $\mathcal{F}_I$ and $\mathcal{F}_B$, respectively. The
set of all facets is denoted by
$\mathcal{F} := \mathcal{F}_I \cup \mathcal{F}_B$.

\subsection{Semi-discrete formulation}
\label{ss:semidisc_wf_new}

Consider the following finite element spaces:
\begin{subequations}
  \label{eq:general_func_spc}
  \begin{align}
  \label{eq:general_func_spc_a}
    V_h &:= \cbr{ v_h \in \sbr{L^2(\mathcal{T})}^d, \
      v_h \in \sbr{P_k(K)}^d\ \forall K \in \mathcal{T}},
    \\
    \label{eq:general_func_spc_b}
    \bar{V}_h &:= \cbr{ \bar{v}_h \in \sbr{L^2(\mathcal{F})}^d, \
      \bar{v}_h \in \sbr{P_k(F)}^d
      \ \forall F \in \mathcal{F},\ \bar{v}_h = 0 \ \mbox{on}\ \Gamma_D},
    \\
    \label{eq:general_func_spc_c}
    Q_h &:= \cbr{ q_h \in L^2(\mathcal{T}),\ q_h \in P_{k-1}(K)\
      \forall K\in\mathcal{T}},
    \\
    \label{eq:general_func_spc_d}
    \bar{Q}_h &:= \cbr{ \bar{q}_h \in L^2(\mathcal{F}),\
      \bar{q}_h \in P_{k}(F) \ \forall F\in\mathcal{F}},
  \end{align}
\end{subequations}
where $P_l(D)$ denotes the space of polynomials of degree $l > 0$ on a
domain~$D$. Note that the spaces $V_{h}$ and $Q_{h}$ are defined on
the whole domain $\mathcal{T}$, whereas the spaces $\Bar{V}_{h}$ and
$\Bar{Q}_{h}$ are defined only on facets of the triangulation.

The spaces $V_h$ and $Q_h$ are discontinuous across cell boundaries,
hence the trace of a function $a \in V_h$ may be double-valued on cell
boundaries. At an interior facet, $F$, we denote the traces of
$a \in V_{h}$ by $a^+$ and~$a^-$. We introduce the jump operator
$\jump{a} := a^+ \cdot n^+ + a^- \cdot n^-$, where $n^{\pm}$ the
outward unit normal on~$\partial K^{\pm}$.

We now state the weak formulation of the proposed method: given a
forcing term $f \in \sbr{L^2(\Omega)}^d$, boundary condition $h \in
\sbr{L^2(\Gamma_N)}^d$ and viscosity $\nu$, find $u_h, \bar{u}_h,
p_h,\bar{p}_h \in V_h \times \bar{V}_h \times Q_h \times \bar{Q}_h$
such that:
\begin{subequations}
  \label{eq:wf_simp}
  \begin{align}
    \label{eq:mass_loc_simp}
    0 =& \sum_K\int_K u_h \cdot \nabla q_h \dif x
    - \sum_K \int_{\partial K} u_h\cdot n \, q_h \dif s
    \quad \forall q_{h} \in Q_{h},
    \\
    \label{eq:mass_glob_simp}
    0 =& \sum_K\int_{\partial K} u_h\cdot n \, \bar{q}_h \dif s
    - \int_{\partial \Omega} \bar{u}_h\cdot n \, \bar{q}_h \dif s
    \quad \forall \Bar{q}_{h} \in \Bar{Q}_{h},
  \end{align}
  and
  \begin{multline}
    \label{eq:mom_loc_simp}
    \int_{\Omega} f \cdot v_h \dif x
    =
    \int_{\Omega} \partial_t u_h \cdot v _h \dif x
    -\sum_K \int_K \sigma_{h} : \nabla v_h \dif x
    +\sum_K \int_{\partial K} \hat{\sigma}_{h} : \del{v_h\otimes n} \dif s
\\
    + \sum_K\int_{\partial K}\nu \del{\del{\bar{u}_h - u_h} \otimes n} : \nabla v_h\dif s
    \quad \forall v_{h} \in V_{h},
  \end{multline}
  \begin{equation}
  \label{eq:mom_glob_simp}
    \int_{\Gamma_N} h \cdot\bar{v}_h\dif s
    = \sum_K\int_{\partial K} \hat{\sigma}_{h} : \del{\bar{v}_h \otimes n}  \dif s
    - \int_{\Gamma_N} \del{1 - \lambda}\del{\bar{u}_h \cdot n} \bar{u}_h \cdot \bar{v}_h\dif s
    \quad \forall \Bar{v}_{h} \in \Bar{V}_{h},
  \end{equation}
\end{subequations}
where $\hat{\sigma}_h := \hat{\sigma}_{a,h} + \hat{\sigma}_{d,h}$ is
the `numerical flux' on cell facets. The advective part of the
numerical flux is given by:
\begin{equation}
  \label{eq:numflux_sigmaa}
  \hat{\sigma}_{a,h} := \sigma_{a,h} + \del{\bar{u}_h - u_h} \otimes \lambda u_h,
\end{equation}
where $\lambda$ is an indicator function that takes on a value of
unity on inflow cell boundaries (where $u_h \cdot n < 0$) and a value
of zero on outflow cell facets (where $u_h \cdot n \ge 0$).  This
definition of the numerical flux provides upwinding of the advective
component of the flux.  The diffusive part of the numerical flux is
defined as
\begin{equation}
  \label{eq:numflux_sigmad}
  \hat{\sigma}_{d,h} := \bar{p}_h \mathbb{I} - \nu\nabla u_h
  - \frac{\nu \alpha}{h_K} \del{\bar{u}_h - u_h}\otimes n,
\end{equation}
where $\alpha > 0$ is a penalty parameter as is typical of Nitsche and
interior penalty methods. It is proven in
\citep{Wells:2011,Rhebergen:2017} that $\alpha$ needs to be
sufficiently large to ensure stability.

A key feature of this formulation, and what distinguishes it from
standard discontinuous Galerkin methods, is that functions on cells
(functions in $V_h$ and $Q_h$) are not coupled across facets directly
via the numerical flux. Rather, fields on neighboring cells are
coupled via the facet functions $\Bar{u}_{h}$ and~$\Bar{p}_{h}$. The
fields $u_{h}$ and $p_{h}$ can therefore be eliminated locally via
static condensation, resulting in a global system of equations in
terms of the facet functions only.  This substantially reduces the
size of the global systems compared to a standard discontinuous
Galerkin method on the same mesh, yet still permits the natural
incorporation of upwinding and cell-wise balances.

The weak formulation presented here is the weak formulation of
\citet{Labeur:2012} with conservative form of the advection term
($\chi = 1$ in \cref{eq:ns_mom_weighted}).  The key difference is that
we have been more prescriptive on the relationships between the finite
element spaces in \cref{eq:general_func_spc}, and we will prove that
this leads to some appealing properties. In particular, the spaces in
\cref{eq:general_func_spc} are such that: for $u_h \in
\sbr{P_k(K)}^d$, $\nabla \cdot u_h \in P_{k-1}(K)$ and $u_h \cdot n
\in P_{k}(F)$; and for $\bar{u}_h \in \sbr{P_{k}(F)}^d$,
$\bar{u}_h\cdot n \in P_{k}(F)$. Furthermore, the function spaces have
been chosen such that the resulting method is inf-sup stable,
see~\citep{Rhebergen:2017}. The resulting weak formulation can be
shown to be equivalent to a weak formulation in which the approximate
velocity field lies in the Brezzi--Douglas--Marini (BDM) finite
element space~\citep[Section~3.4]{Rhebergen:2017}. Hybridization of
other $H({\rm div})$ conforming finite element spaces, see
e.g.~\citep{Boffi:book}, are also possible.

\begin{proposition}[mass conservation]
  \label{prop:masscons}
  If $u_h \in V_h$ and $\bar{u}_h \in \bar{V}_h$ satisfy
  \cref{eq:wf_simp}, with $V_h$ and $\bar{V}_h$ defined in
  \cref{eq:general_func_spc}, then
  \begin{equation}
    \label{eq:divuzero}
    \nabla \cdot u_h = 0 \qquad \forall x \in K,\ \forall K \in \mathcal{T},
  \end{equation}
  and
  \begin{subequations}
    \label{eq:unbarun}
    \begin{align}
      \label{eq:unbarun_I}
      \jump{u_h} &= 0 && \forall x \in F,\
      \forall F\in\mathcal{F}_I,
      \\
      \label{eq:unbarun_B}
      u_h \cdot n &= \bar{u}_h \cdot n  && \forall x \in F,\
    \forall F\in\mathcal{F}_B.
    \end{align}
  \end{subequations}
\end{proposition}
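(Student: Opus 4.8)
The plan is to treat the two mass-conservation equations separately, exploiting the fact that the polynomial degrees of the spaces in \cref{eq:general_func_spc} are tuned so that each target quantity is itself an admissible test function.

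First I would establish the pointwise divergence-free property \eqref{eq:divuzero} from the local mass equation \eqref{eq:mass_loc_simp}. Integrating the volume term by parts on each cell, the divergence theorem gives \[ \int_K u_h \cdot \nabla q_h \dif x = -\int_K \del{\nabla \cdot u_h} q_h \dif x + \int_{\partial K} \del{u_h \cdot n} q_h \dif s, \] so that the facet contribution cancels exactly against the second term of \eqref{eq:mass_loc_simp}, leaving \[ \sum_K \int_K \del{\nabla \cdot u_h} q_h \dif x = 0 \quad \forall q_h \in Q_h. \] The key observation is that for $u_h \in \sbr{P_k(K)}^d$ one has $\nabla \cdot u_h \in P_{k-1}(K)$, which lies precisely in $Q_h$; choosing $q_h = \nabla \cdot u_h$ cell by cell then yields $\sum_K \norm{\nabla \cdot u_h}_{L^2(K)}^2 = 0$ and hence \eqref{eq:divuzero}.

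Next I would turn to the facet conditions using the global mass equation \eqref{eq:mass_glob_simp}. Splitting the cell-boundary sum into interior and boundary facets, and using that each interior facet $F$ is shared by the two cells $K^+$ and $K^-$, the first term rewrites by the definition of $\jump{\cdot}$ as $\sum_{F \in \mathcal{F}_I} \int_F \jump{u_h} \bar{q}_h \dif s + \sum_{F \in \mathcal{F}_B} \int_F \del{u_h \cdot n} \bar{q}_h \dif s$. Subtracting the boundary integral over $\partial \Omega$ then gives \[ \sum_{F \in \mathcal{F}_I} \int_F \jump{u_h} \bar{q}_h \dif s + \sum_{F \in \mathcal{F}_B} \int_F \del{u_h \cdot n - \bar{u}_h \cdot n} \bar{q}_h \dif s = 0 \quad \forall \bar{q}_h \in \bar{Q}_h. \] Since $\bar{Q}_h$ is defined facet by facet and consists of $P_k(F)$ polynomials, while both $\jump{u_h}$ and $u_h \cdot n - \bar{u}_h \cdot n$ lie in $P_k(F)$ (because $u_h \cdot n$ and $\bar{u}_h \cdot n$ each lie in $P_k(F)$), I can choose $\bar{q}_h$ to coincide with each of these quantities on a single facet and to vanish elsewhere. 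This isolates each facet integral and forces $\jump{u_h} = 0$ on every interior facet and $u_h \cdot n = \bar{u}_h \cdot n$ on every boundary facet, establishing \eqref{eq:unbarun_I} and \eqref{eq:unbarun_B}.

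The step I expect to be the crux is not any single calculation but the degree-matching between the spaces. The choice $Q_h = P_{k-1}$, one order below the velocity, is exactly what renders $\nabla \cdot u_h$ an admissible test function in \eqref{eq:mass_loc_simp}, while the choice $\bar{Q}_h = P_k$ on facets is exactly what renders the normal jumps admissible in \eqref{eq:mass_glob_simp}. Without these precise relationships one could not test against the very quantities one wishes to annihilate, and the argument would collapse; this is the mechanism by which this particular configuration of spaces delivers a pointwise solenoidal, $H(\mathrm{div})$-conforming velocity field.
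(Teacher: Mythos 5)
Your proof is correct and follows essentially the same route as the paper: integrate the local mass equation \cref{eq:mass_loc_simp} by parts and use $\nabla \cdot u_h \in P_{k-1}(K) = Q_h|_K$ to test against the divergence itself, then rewrite \cref{eq:mass_glob_simp} facet-wise and use that $\jump{u_h}$ and $(u_h - \bar{u}_h)\cdot n$ lie in $P_k(F) = \bar{Q}_h|_F$. The only difference is that you spell out the test-function choices that the paper leaves implicit, which is a welcome clarification rather than a deviation.
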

\begin{proof}
  Applying integration-by-parts to \cref{eq:mass_loc_simp}:
  \begin{equation}
    0
    = \int_K q_h \nabla \cdot u_h \dif x \qquad
    \forall q_{h} \in P_{k-1}(K), \
    \forall K \in \mathcal{T}.
  \end{equation}
  Since $q_{h}$, $\nabla \cdot u_h \in P_{k-1}(K)$, pointwise
  satisfaction of the continuity equation, \cref{eq:divuzero},
  follows.

  It follows from \cref{eq:mass_glob_simp} that:
  \begin{equation}
    \label{eq:mass_glob_inproof}
    0
    =
    \sum_{F\in\mathcal{F}_I} \int_{F} \jump{u_h} \bar{q}_h \dif s
    + \sum_{F\in\mathcal{F}_B} \int_{F} \del{u_h - \bar{u}_h } \cdot n \bar{q}_h \dif s
    \quad \forall \Bar{q}_{h} \in \Bar{Q}_{h}.
  \end{equation}
  Since $\bar{q}_{h}$, $u_h \cdot n$, $\bar{u}_h \cdot n \in
  P_{k}(F)$, \cref{eq:unbarun} follows. \qed
\end{proof}

\Cref{prop:masscons} is a stronger statement of mass conservation than
in \citet[Proposition~4.2]{Labeur:2012}, in which mass conservation
for the mixed-order case was proved locally (cell-wise) in an integral
sense only.  Under certain conditions, implementations in
\citep{Labeur:2012} satisfy \cref{eq:divuzero}, but not
\cref{eq:unbarun}.  We will show that this difference is critical for
the formulation in this work as it allows simultaneous satisfaction of
momentum conservation and energy stability.

We next show momentum conservation for the semi-discrete weak
formulation in terms of the numerical flux.
\begin{proposition}[momentum conservation]
  \label{prop:momcons}
  Let $u_h, \bar{u}_h, p_h, \bar{p}_h \in V_h \times \bar{V}_h \times
  Q_h \times \bar{Q}_h$ satisfy \cref{eq:wf_simp}. Then,
  \begin{equation}
    \label{eq:momconsloc}
    \od{}{t}\int_K u_h \dif x = \int_K f \dif x
    - \int_{\partial K}\hat{\sigma}_h n \dif s
    \quad \forall K \in \mathcal{T}.
  \end{equation}
  Furthermore, if $\Gamma_D = \emptyset$,
  \begin{equation}
    \label{eq:momconsglob}
    \od{}{t} \int_{\Omega} u_h \dif x
    = \int_{\Omega} f \dif x
    - \int_{\partial\Omega}(1-\lambda)(\bar{u}_h \cdot n) \bar{u}_h \dif s
    - \int_{\partial\Omega} h \dif s.
  \end{equation}
\end{proposition}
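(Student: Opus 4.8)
The plan is to exploit that constant vector fields annihilate all gradient-based volume and facet terms in the momentum equations, so that testing against constants isolates the time derivative, the body force, and the numerical flux across cell boundaries. Throughout I will use the elementary identity $\hat{\sigma}_h : \del{c \otimes n} = c \cdot \del{\hat{\sigma}_h n}$, valid for any fixed $c \in \mathbb{R}^d$, which matches the flux vector $\hat{\sigma}_h n$ appearing in \eqref{eq:momconsloc}.

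For the local statement \eqref{eq:momconsloc}, I would fix a cell $K \in \mathcal{T}$ and an arbitrary constant $c \in \mathbb{R}^d$, and take as test function in \eqref{eq:mom_loc_simp} the field equal to $c$ on $K$ and zero on all other cells; this is admissible since constants lie in $\sbr{P_k(K)}^d \subset V_h$. As $\nabla v_h = 0$ on $K$, both the volume term $\sigma_h : \nabla v_h$ and the Nitsche-type facet term $\nu\del{\del{\bar{u}_h - u_h}\otimes n}:\nabla v_h$ vanish, the sum over cells collapses to $K$ alone, and the remaining facet term equals $\int_{\partial K}\del{\hat{\sigma}_h n}\cdot c\dif s$ by the identity above. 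Since $\int_K \partial_t u_h\dif x = \od{}{t}\int_K u_h\dif x$ and $c$ is arbitrary, rearranging gives \eqref{eq:momconsloc} componentwise.

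For the global statement \eqref{eq:momconsglob}, with $\Gamma_D = \emptyset$ so that $\partial\Omega = \Gamma_N$, I would instead take $v_h \equiv c$ on all of $\mathcal{T}$ in \eqref{eq:mom_loc_simp}; the same cancellation of gradient terms leaves $\int_\Omega f\cdot c\dif x = c\cdot\od{}{t}\int_\Omega u_h\dif x + \sum_K\int_{\partial K}\del{\hat{\sigma}_h n}\cdot c\dif s$. The remaining task is to evaluate the facet-flux sum, and for this I would test the facet momentum equation \eqref{eq:mom_glob_simp} with the constant $\bar{v}_h \equiv c$, which is an admissible element of $\bar{V}_h$ \emph{precisely because} $\Gamma_D = \emptyset$ removes the constraint $\bar{v}_h = 0$ on $\Gamma_D$. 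Using the same tensor identity, \eqref{eq:mom_glob_simp} reads $\int_{\Gamma_N} h\cdot c\dif s = \sum_K\int_{\partial K}\del{\hat{\sigma}_h n}\cdot c\dif s - \int_{\Gamma_N}\del{1-\lambda}\del{\bar{u}_h\cdot n}\bar{u}_h\cdot c\dif s$, which expresses the facet-flux sum entirely in terms of boundary data. Substituting into the previous relation and using that $c$ is arbitrary yields \eqref{eq:momconsglob}.

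The argument is essentially a matter of selecting the correct test functions, so no deep analytic obstacle arises. The point requiring care is the bookkeeping of the facet-flux sum: the quantity $\sum_K\int_{\partial K}\del{\hat{\sigma}_h n}\cdot c\dif s$ appearing in the tested local and facet equations must be recognized as one and the same, which relies on $\bar{v}_h$ being single-valued on interior facets and on the constant $\bar{v}_h \equiv c$ being admissible, i.e.\ on $\Gamma_D = \emptyset$. This identification is what makes the interior-facet contributions drop out, leaving only the prescribed boundary flux $h$ and the outflow advective correction on $\partial\Omega$.
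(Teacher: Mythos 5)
Your proposal is correct and follows essentially the same route as the paper's proof: testing the cell momentum equation \cref{eq:mom_loc_simp} with a constant (equivalently, the canonical basis vectors $e_j$) supported on a single cell gives \cref{eq:momconsloc}, and combining the globally constant test function with $\bar{v}_h$ constant in \cref{eq:mom_glob_simp} (the paper uses $\bar{v}_h = -e_j$ and sums, which is the same algebra as your substitution) gives \cref{eq:momconsglob}. Your added remarks --- that admissibility of the constant facet test function hinges on $\Gamma_D = \emptyset$, and that the facet-flux sums in the two tested equations must be identified --- are accurate and merely make explicit what the paper leaves implicit.
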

\begin{proof}
  In \cref{eq:mom_loc_simp}, set $v_h = e_j$ on $K$, where $e_j$ is a
  canonical unit basis vector, and set $v_h = 0$ on $\mathcal{T}
  \backslash K$ in \cref{eq:mom_loc_simp}:
  \begin{equation}
    \od{}{t}\int_{K} u_h\cdot e_j \dif x
    +\int_{\partial K} \del{\hat{\sigma}_h \cdot n} \cdot e_j \dif s
    = \int_{K} f \cdot e_{j} \dif x,
  \end{equation}
  which proves \cref{eq:momconsloc}. \Cref{eq:momconsglob} follows
  immediately by setting $v_h = e_j$ in \cref{eq:mom_loc_simp},
  $\bar{v}_h = - e_j$ in \cref{eq:mom_glob_simp} and summing the two
  results. \qed
\end{proof}

We next prove that the method is \emph{also} globally energy stable.

\begin{proposition}[global energy stability]
  \label{prop:globenerStab}
  If $u_h, \bar{u}_h, p_h, \bar{p}_h \in V_h \times \bar{V}_h \times
  Q_h \times \bar{Q}_h$ satisfy \cref{eq:wf_simp}, for homogeneous
  boundary conditions, $f = 0$ and for a suitably large~$\alpha$:
  \begin{equation}
    \label{eq:globenerStab}
    \od{}{t}\sum_K\int_K \envert{u_h}^2 \dif x \le 0.
  \end{equation}
\end{proposition}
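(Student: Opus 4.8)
The plan is to derive a discrete energy identity by testing the momentum equations with the velocity itself. I would take $v_h = u_h$ in \cref{eq:mom_loc_simp} and $\bar v_h = \bar u_h$ in \cref{eq:mom_glob_simp}, impose $f = 0$ and the homogeneous data $h = 0$, and subtract the facet momentum equation from the cell momentum equation. The transient term then collapses to $\tfrac12\od{}{t}\sum_K\int_K\envert{u_h}^2\dif x$, and since the numerical flux $\hat\sigma_h$ appears with the same argument in both \cref{eq:mom_loc_simp,eq:mom_glob_simp}, the facet flux contributions assemble into integrals of $\hat\sigma_h:\del{\del{u_h - \bar u_h}\otimes n}$. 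The remaining task is to show that every term on the right-hand side of the resulting identity is non-positive.

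The pressure contributions are removed using \cref{prop:masscons}. The volume pressure term equals $\sum_K\int_K p_h\del{\nabla\cdot u_h}\dif x$ and vanishes pointwise because $\nabla\cdot u_h = 0$, while the facet pressure term equals $\sum_K\int_{\partial K}\bar p_h\del{u_h - \bar u_h}\cdot n\dif s$; since $\bar p_h$ is single-valued on facets, its interior-facet contribution reduces to $\int_F\bar p_h\jump{u_h}\dif s = 0$ and its boundary-facet contribution vanishes by $u_h\cdot n = \bar u_h\cdot n$. This is exactly the step that requires the \emph{pointwise} form of mass conservation in \cref{prop:masscons} rather than a merely integral statement, and it is the mechanism by which the choice of facet pressure space enables energy stability. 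Equivalently, one could test \cref{eq:mass_loc_simp,eq:mass_glob_simp} with $p_h$ and $\bar p_h$, but invoking \cref{prop:masscons} is cleaner.

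For the advective terms I would use $\del{u_h\otimes u_h}:\nabla u_h = u_h\cdot\nabla\del{\tfrac12\envert{u_h}^2}$ and integrate by parts, the divergence-free property again annihilating the volume remainder and leaving only boundary integrals. Collecting the cell and facet advective flux contributions together with the outflow term $\int_{\Gamma_N}\del{1-\lambda}\del{\bar u_h\cdot n}\envert{\bar u_h}^2\dif s$, a short rearrangement recasts the interior contribution as $\sum_K\int_{\partial K}\del{u_h\cdot n}\del{\lambda - \tfrac12}\envert{u_h - \bar u_h}^2\dif s$ together with a term proportional to $\envert{\bar u_h}^2$ that telescopes across interior facets using $\jump{u_h} = 0$ and combines with the Neumann term on the boundary. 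The definition of the upwind indicator $\lambda$ (unity where $u_h\cdot n < 0$, zero otherwise) makes each facet contribution non-positive on a sign-by-sign basis, so the advective part is purely dissipative; in particular no special value of $\chi$ is needed.

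The main obstacle is the diffusive part. After using $\nabla\cdot u_h = 0$, it reduces to
\begin{equation*}
  -\nu\sum_K\int_K\envert{\nabla u_h}^2\dif x
  + 2\nu\sum_K\int_{\partial K}\del{u_h - \bar u_h}\cdot\del{\nabla u_h\, n}\dif s
  - \sum_K\frac{\nu\alpha}{h_K}\int_{\partial K}\envert{u_h - \bar u_h}^2\dif s,
\end{equation*}
the familiar symmetric interior-penalty/Nitsche structure, and showing it is non-positive is precisely the coercivity estimate in which the hypothesis of sufficiently large $\alpha$ enters. I would bound the cross term by a discrete trace inequality $\|\nabla u_h\, n\|_{\partial K}^2 \le C_{\mathrm{inv}} h_K^{-1}\|\nabla u_h\|_K^2$ followed by Young's inequality, absorbing half of $\nu\|\nabla u_h\|_K^2$ and a multiple of the penalty term; for $\alpha$ exceeding a constant depending only on $C_{\mathrm{inv}}$ and the mesh shape-regularity, the remainder is non-negative. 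Combining the non-positive advective, diffusive, and boundary contributions then yields \cref{eq:globenerStab}.
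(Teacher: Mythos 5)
Your proposal is correct and follows essentially the same route as the paper's own proof: energy test functions, elimination of the pressure terms via the pointwise divergence-free property and normal continuity from \cref{prop:masscons}, conversion of the advective contributions into the upwind dissipation $\tfrac{1}{2}\envert{u_h \cdot n}\envert{u_h - \bar{u}_h}^2$ plus a non-positive $\Gamma_N$ term, and absorption of the Nitsche cross term for sufficiently large $\alpha$ (which you establish inline by a trace inequality and Young's inequality, where the paper cites \cite[Lemma~5.2]{Wells:2011} and \cite[Lemma~4.2]{Rhebergen:2017}). The only cosmetic deviations are that the paper tests the mass equations with $q_h = -p_h$, $\bar{q}_h = -\bar{p}_h$ rather than invoking \cref{prop:masscons} directly for the pressure terms (an equivalence you yourself note), and uses the identity $\lambda u_h \cdot n = \del{u_h \cdot n - \envert{u_h \cdot n}}/2$ in place of your sign-by-sign treatment of the factor $\lambda - \tfrac{1}{2}$.
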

\begin{proof}
  Setting $q_h = -p_h$, $\bar{q}_h = -\bar{p}_h$, $v_h = u_h$ and
  $\bar{v}_h = -\bar{u}_h$ in
  \cref{eq:mass_loc_simp,eq:mass_glob_simp,eq:mom_loc_simp,eq:mom_glob_simp}
  and inserting the expressions for the numerical fluxes
  (\cref{eq:momentumflux,eq:numflux_sigmaa,eq:numflux_sigmad}),
  and summing:
  \begin{multline}
    \label{eq:afterPresIntbyparts}
    \sum_K \frac{1}{2} \int_K \partial_t \envert{u_h}^2 \dif x
    + \sum_K \frac{1}{2} \int_{\partial K} \del{u_{h} \cdot n} \envert{u_h}^{2} \dif s
    \\
    - \sum_K \frac{1}{2} \int_{\partial K} \del{u_{h} \cdot n} \envert{\bar{u}_h}^{2} \dif s
    + \sum_K \frac{1}{2} \int_{\partial K} \envert{u_h \cdot n} \envert{u_h - \bar{u}_h}^2 \dif s
    \\
    + \sum_K \int_K \nu \envert{\nabla u_h}^2 \dif x
    + \sum_K \int_{\partial K} \frac{\nu \alpha}{h_K} \envert{\bar{u}_h - u_h}^2 \dif s
    \\
    + 2\sum_K \int_{\partial K} \nu \del{\nabla u_h \cdot n} \cdot \del{\bar{u}_h - u_h} \dif s
    + \int_{\Gamma_N} (1-\lambda)(\bar{u}_h \cdot n) \envert{\bar{u}_h}^2 \dif s
    \\
    - \sum_K \int_K \del{u_h \otimes u_h} : \nabla u_h \dif x
    = 0,
  \end{multline}
  where we have used that $\lambda u_{h} \cdot n = \del{u_{h} \cdot n
    - |u_{h} \cdot n|}/2$, and applied integration-by-parts to the
  pressure gradient terms.  Since $\bar{u}_h$ is single-valued on
  facets, the normal component of $u_h$ is continuous across facets
  and $\bar{u}_h \cdot n = u_{h} \cdot n$ on the domain boundary (see
  \cref{prop:masscons}), the third integral on the left-hand side of
  \cref{eq:afterPresIntbyparts} can be simplified:
  \begin{equation}
    \label{eq:intzeroterm}
    - \sum_K \frac{1}{2} \int_{\partial K} \del{u_{h} \cdot n} \envert{\bar{u}_h}^{2} \dif s
    =
      - \frac{1}{2} \int_{\Gamma_N} \del{\bar{u}_h \cdot n} \envert{\bar{u}_h}^{2} \dif s.
  \end{equation}
  We consider now the last term on the left-hand side of
  \cref{eq:afterPresIntbyparts}. On each cell $K$ it holds that $-u_h
  \otimes u_h : \nabla u_h = (\nabla \cdot u_h)(u_h \cdot u_h)/2 -
  \nabla\cdot((u_h \otimes u_h) \cdot u_h)/2 = - \nabla \cdot((u_h
  \otimes u_h) \cdot u_h)/2$, since $\nabla \cdot u_h = 0$ (by
  \cref{prop:masscons}). It follows that
  \begin{equation}
    \label{eq:rewritelast}
    - \sum_K \int_K \del{u_h \otimes u_h} : \nabla u_h \dif x
    = -\frac{1}{2} \sum_K \int_{\partial K} \del{u_{h} \cdot n} \envert{u_{h}}^{2} \dif s.
  \end{equation}
  Combining \cref{eq:afterPresIntbyparts,eq:intzeroterm,eq:rewritelast},
  \begin{multline}
    \label{eq:afterPresIntbyparts_simp}
    \frac{1}{2} \sum_K  \int_K  \partial_t \envert{u_h}^2 \dif x  =
    -\frac{1}{2}\sum_K  \int_{\partial K}  \envert{u_h \cdot n} \envert{u_h - \bar{u}_h}^2 \dif s
    \\
    -\sum_K \int_K \nu \envert{\nabla u_h}^2 \dif x
    - \sum_K \int_{\partial K} \frac{\nu \alpha}{h_K} \envert{\bar{u}_h - u_h}^2 \dif s
    \\
    - 2 \sum_K \int_{\partial K} \nu \del{\nabla u_h n} \cdot \del{\bar{u}_h - u_h} \dif s
    - \frac{1}{2} \int_{\Gamma_N} \envert{\bar{u}_h \cdot n} \envert{\bar{u}_h}^2 \dif s,
  \end{multline}
  where we have used that
  \begin{equation}
    \int_{\Gamma_N} (1 - \lambda)(\bar{u}_h \cdot n) \envert{\bar{u}_h}^2 \dif s
    - \frac{1}{2} \int_{\Gamma_N} \del{\bar{u}_h \cdot n} \envert{\bar{u}_h}^{2} \dif s
    =
    \frac{1}{2} \int_{\Gamma_N} \envert{\bar{u}_h \cdot n} \envert{\bar{u}_h}^2 \dif s.
  \end{equation}
  It can be proven that there exists an $\alpha > 0$, independent of
  $h_K$, such that
  \begin{multline}
    \sum_K \int_K \nu \envert{\nabla u_h}^2 \dif x +
    \sum_K \int_{\partial K} \frac{\nu \alpha}{h_K} \envert{\bar{u}_h - u_h}^2 \dif s
    \\ \ge
    2 \envert{\sum_K \int_{\partial K} \nu \del{\nabla u_h \cdot n} \cdot \del{\bar{u}_h - u_h} \dif s},
  \end{multline}
  (see~\cite[Lemma~5.2]{Wells:2011}
  and~\cite[Lemma~4.2]{Rhebergen:2017}).  Therefore, the right-hand
  side of \cref{eq:afterPresIntbyparts_simp} is non-positive, proving
  \cref{eq:globenerStab}. \qed
\end{proof}

The key results that enable us to prove global energy stability for
this conservative form of the Navier--Stokes equations are: (a) the
pointwise solenoidal velocity field; and (b) continuity of the normal
component of the velocity field across facets. The latter point is not
fulfilled by the method in~\citep{Labeur:2012}.

\subsection{A fully-discrete weak formulation}
\label{ss:disc_wf_new}

We now consider a fully-discrete formulation. We partition the time
interval $I$ into an ordered series of time levels $0 = t^0 < t^1 <
\cdots < t^N$. The difference between each time level is denoted by
$\Delta t^n = t^{n+1} - t^{n}$. To discretize in time, we consider the
$\theta$-method and denote midpoint values of a function $y$ by $y^{n
  + \theta} := (1-\theta)y^n + \theta y^{n+1}$.  Following
\citet{Labeur:2012}, the convective velocity will be evaluated at the
current time $t^n$, thereby linearizing the problem,~i.e.:
\begin{equation}
  \sigma_{h}^{n + \theta} = \sigma_{a,h}^{n+\theta} + \sigma_{d,h}^{n + \theta}
  \qquad \mbox{where} \qquad
  \sigma_{a,h}^{n + \theta} = u_h^{n+\theta} \otimes u_h^n,
\end{equation}
and
\begin{equation}
  \hat{\sigma}_{h}^{n + \theta} = \hat{\sigma}_{a,h}^{n + \theta}
  + \hat{\sigma}_{d,h}^{n+\theta}
  \qquad \mbox{where} \qquad
  \hat{\sigma}_{a,h}^{n+\theta}
  =
  \sigma_{a,h}^{n + \theta}
  + (\bar{u}_h^{n+\theta} - u_h^{n+\theta}) \otimes \lambda u_h^n.
\end{equation}
The time-discrete counterpart of \cref{eq:wf_simp} is: given $u_{h}^n,
\bar{u}_{h}^n, p_{h}^n, \bar{p}_{h}^n \in V_h \times \bar{V}_h \times
Q_h \times \bar{Q}_h$ at time $t^n$, the forcing term $f^{n+\theta}
\in \sbr{L^2(\Omega)}^d$, the boundary condition $h^{n+\theta} \in
\sbr{L^2(\Gamma_N)}^d$, and the viscosity $\nu$, find $u_{h}^{n+1},
\bar{u}_{h}^{n+1}, p_{h}^{n+1}, \bar{p}_{h}^{n+1} \in V_h \times
\bar{V}_h \times Q_h \times \bar{Q}_h$ such that mass conservation,
\begin{subequations}
  \label{eq:wf_simp_t}
  \begin{align}
    \label{eq:mass_loc_simp_t}
    0 =& \sum_K \int_K u_{h}^{n+1} \cdot \nabla q_h \dif x
    - \sum_K\int_{\partial K} u_{h}^{n+1} \cdot n \, q_h \dif s,
    \\
    \label{eq:mass_glob_simp_t}
    0 =& \sum_K\int_{\partial K} u_{h}^{n+1} \cdot n \, \bar{q}_h \dif s
         - \int_{\partial \Omega} \bar{u}_{h}^{n+1} \cdot n \, \bar{q}_h \dif s,
  \end{align}
  and momentum conservation,
  \begin{multline}
    \label{eq:mom_loc_simp_t}
    \int_{\Omega} f^{n + \theta} \cdot v_h \dif x
    = \int_{\Omega} \frac{u_{h}^{n+1} - u_{h}^{n}}{\Delta t^n} \cdot v_h \dif x
    -\sum_K \int_K \sigma_{h}^{n+\theta} : \nabla v_h \dif x
    \\
    +\sum_K \int_{\partial K} \hat{\sigma}_{h}^{n+\theta} : v_h\otimes n \dif s
    \\
    + \sum_K \int_{\partial K} \nu \del{\del{\bar{u}_{h}^{n+\theta} - u_{h}^{n+\theta}} \otimes n} : \nabla v_h\dif s,
  \end{multline}
  \begin{multline}
    \label{eq:mom_glob_simp_t}
    \int_{\Gamma_N} h^{n+\theta} \cdot \bar{v}_h\dif s
    = \sum_K \int_{\partial K} \hat{\sigma}_{h}^{n+\theta} : \bar{v}_h \otimes n \dif s
    \\
    - \int_{\Gamma_N} \del{1 - \lambda} \del{\bar{u}_{h}^{n} \cdot n} \bar{u}_{h}^{n+\theta} \cdot \bar{v}_h\dif s,
  \end{multline}
\end{subequations}
are satisfied for all $v_h, \bar{v}_h, q_h, \bar{q}_h \in V_h \times
\bar{V}_h \times Q_h \times \bar{Q}_h$.  Here $\lambda$ is evaluated
using the known velocity field at time~$t^n$.

In \cref{ss:semidisc_wf_new} we proved that the semi-discrete
formulation \cref{eq:wf_simp} is momentum conserving, energy stable
and exactly mass conserving when using the function spaces given by
\cref{eq:general_func_spc}. We show next that the fully-discrete
formulation given by \cref{eq:wf_simp_t} inherits these properties.

\begin{proposition}[fully-discrete mass conservation]
  \label{prop:masscons_t}
  If $u_{h}^{n+1} \in V_h$ and $\bar{u}_{h}^{n+1} \in \bar{V}_h$
  satisfy \cref{eq:wf_simp_t}, then
  \begin{equation}
    \label{eq:divuzero_t}
    \nabla \cdot u_{h}^{n+1} = 0 \qquad \forall x \in K,\
    \forall K\in\mathcal{T},
  \end{equation}
  and
  \begin{subequations}
    \label{eq:unbarun_t}
    \begin{align}
      \jump{u_{h}^{n+1}} &= 0 && \forall x \in \mathcal{F},\
      \forall \mathcal{F} \in \mathcal{F}_I,
      \\
      u_{h}^{n+1} \cdot n &= \bar{u}_{h}^{n+1} \cdot n  &&
      \forall x \in \mathcal{F},\ \forall \mathcal{F} \in \mathcal{F}_B.
    \end{align}
  \end{subequations}
\end{proposition}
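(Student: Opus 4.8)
The plan is to observe that the fully-discrete mass-conservation equations \cref{eq:mass_loc_simp_t,eq:mass_glob_simp_t} are structurally identical to their semi-discrete counterparts \cref{eq:mass_loc_simp,eq:mass_glob_simp}, with $u_h$ and $\bar{u}_h$ replaced throughout by $u_h^{n+1}$ and $\bar{u}_h^{n+1}$. Crucially, these two equations impose constraints purely at the new time level $t^{n+1}$: they do not involve the $\theta$-weighting, the midpoint values, or the time-difference quotient that appear in the momentum equation \cref{eq:mom_loc_simp_t}. Consequently the entire argument of \cref{prop:masscons} transfers, and nothing new about the time discretization is required.

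First I would apply integration-by-parts cell-wise to \cref{eq:mass_loc_simp_t}, exactly as in the proof of \cref{prop:masscons}, to obtain
\begin{equation*}
  0 = \int_K q_h \, \nabla \cdot u_h^{n+1} \dif x
  \qquad \forall q_h \in P_{k-1}(K), \ \forall K \in \mathcal{T}.
\end{equation*}
Since the function spaces \cref{eq:general_func_spc} are chosen so that $\nabla \cdot u_h^{n+1} \in P_{k-1}(K)$ whenever $u_h^{n+1} \in \sbr{P_k(K)}^d$, the test function $q_h$ ranges over the same polynomial space to which $\nabla \cdot u_h^{n+1}$ belongs; taking $q_h = \nabla \cdot u_h^{n+1}$ then forces $\nabla \cdot u_h^{n+1} = 0$ pointwise on each cell, which is \cref{eq:divuzero_t}.

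Next I would rewrite \cref{eq:mass_glob_simp_t} by collecting the cell-boundary contributions of adjacent cells into facet integrals, as in \cref{eq:mass_glob_inproof}, to obtain
\begin{equation*}
  0 =
  \sum_{F \in \mathcal{F}_I} \int_F \jump{u_h^{n+1}} \bar{q}_h \dif s
  + \sum_{F \in \mathcal{F}_B} \int_F \del{u_h^{n+1} - \bar{u}_h^{n+1}} \cdot n \, \bar{q}_h \dif s
  \qquad \forall \bar{q}_h \in \bar{Q}_h.
\end{equation*}
Because $\bar{q}_h$, $u_h^{n+1} \cdot n$ and $\bar{u}_h^{n+1} \cdot n$ all lie in $P_k(F)$ on each facet, and a test function may be supported on a single facet so that the interior and boundary contributions decouple, each integrand must vanish. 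This yields $\jump{u_h^{n+1}} = 0$ on interior facets and $u_h^{n+1} \cdot n = \bar{u}_h^{n+1} \cdot n$ on boundary facets, i.e.~\cref{eq:unbarun_t}.

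Since the argument is a direct repetition of \cref{prop:masscons}, I do not expect any genuine obstacle. The only point requiring care is the bookkeeping in passing from the cell-boundary sums in \cref{eq:mass_loc_simp_t,eq:mass_glob_simp_t} to facet integrals: one must correctly combine the two traces of $u_h^{n+1}$ on each interior facet into the jump $\jump{u_h^{n+1}}$, respecting the sign conventions of $n^{\pm}$, exactly as is done implicitly in the semi-discrete proof. Everything else is immediate, and no restriction on $\Delta t^n$ or $\theta$ enters.
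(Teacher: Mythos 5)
Your proof is correct and takes exactly the route the paper intends: the paper omits this proof, stating only that it is ``similar to that of \cref{prop:masscons}'', and your argument is precisely that semi-discrete proof transcribed with $u_h^{n+1}$, $\bar{u}_h^{n+1}$, including the correct observation that \cref{eq:mass_loc_simp_t,eq:mass_glob_simp_t} involve only time level $t^{n+1}$ so no condition on $\theta$ or $\Delta t^n$ can enter.
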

\begin{proof}
  The proof is similar to that of \cref{prop:masscons} and therefore
  omitted. \qed
\end{proof}

\begin{proposition}[fully-discrete momentum conservation]
  \label{prop:momcons_t}
  If
  $u_{h}^{n}, \bar{u}_{h}^{n}, p_{h}^{n}, \bar{p}_{h}^{n} \in V_h
  \times \bar{V}_h \times Q_h \times \bar{Q}_h$
  and
  $u_{h}^{n+1}, \bar{u}_{h}^{n+1}, p_{h}^{n+1}, \bar{p}_{h}^{n+1} \in
  V_h \times \bar{V}_h \times Q_h \times \bar{Q}_h$
  satisfy \cref{eq:wf_simp_t}, then
  \begin{equation}
    \label{eq:momconsloc_t}
    \int_K \frac{u_{h}^{n+1} - u_{h}^{n}} {\Delta t^{n}} \dif x
    = \int_K f^{n + \theta} \dif x
    - \int_{\partial K}\hat{\sigma}_{h}^{n + \theta} n \dif s
    \quad \forall K \in \mathcal{T}.
  \end{equation}
  Furthermore, if $\Gamma_D = \emptyset$,
  \begin{multline}
    \label{eq:momconsglob_t}
    \sum_K \int_K \frac{u_{h}^{n + 1} - u_{h}^{n}} {\Delta t^{n}} \dif x
    = \sum_K \int_K f^{n+\theta} \dif x
    - \int_{\partial\Omega} (1 - \lambda)(\bar{u}_{h}^{n} \cdot n) \bar{u}_{h}^{n + \theta} \dif s \\
    - \int_{\partial\Omega} h^{n + \theta} \dif s.
  \end{multline}
\end{proposition}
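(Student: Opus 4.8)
The plan is to reproduce the argument used for the semi-discrete momentum conservation (\cref{prop:momcons}), since the time-discrete system \cref{eq:wf_simp_t} shares exactly the conservative structure of \cref{eq:wf_simp}: the time derivative $\partial_t u_h$ is replaced by the difference quotient $(u_h^{n+1}-u_h^n)/\Delta t^n$ and every flux is evaluated at the midpoint level $n+\theta$, while the defining feature—that the \emph{same} numerical flux $\hat\sigma_h^{n+\theta}$ is paired with $v_h\otimes n$ in the cell equation and with $\bar v_h\otimes n$ in the facet equation—is untouched.

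For the local statement \cref{eq:momconsloc_t} I would test \cref{eq:mom_loc_simp_t} with the piecewise-constant field equal to a fixed canonical basis vector $e_j$ on a single cell $K$ and vanishing on $\mathcal{T}\setminus K$; this is admissible because constants lie in $V_h=[P_k(K)]^d$ for $k\ge 1$. Since $\nabla v_h = 0$, both the volume term $\int_K \sigma_h^{n+\theta}:\nabla v_h\,\mathrm{d}x$ and the viscous consistency term $\int_{\partial K}\nu((\bar u_h^{n+\theta}-u_h^{n+\theta})\otimes n):\nabla v_h\,\mathrm{d}s$ vanish identically. Using the identity $\hat\sigma_h^{n+\theta}:(e_j\otimes n)=(\hat\sigma_h^{n+\theta}n)\cdot e_j$, what survives is precisely the $e_j$-component of \cref{eq:momconsloc_t}; letting $j$ range over $1,\dots,d$ yields the vector equation.

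For the global statement \cref{eq:momconsglob_t} I would test \cref{eq:mom_loc_simp_t} with $v_h=e_j$ on \emph{all} of $\mathcal{T}$ and simultaneously test \cref{eq:mom_glob_simp_t} with $\bar v_h=-e_j$, then add the two resulting scalar relations. The choice $\bar v_h=-e_j$ is admissible in $\bar V_h$ exactly because $\Gamma_D=\emptyset$, so the constraint $\bar v_h=0$ on $\Gamma_D$ is vacuous—this is the reason the hypothesis $\Gamma_D=\emptyset$ is needed. The decisive point is that the flux boundary term $\sum_K\int_{\partial K}(\hat\sigma_h^{n+\theta}n)\cdot e_j\,\mathrm{d}s$ enters with a plus sign from the cell equation and a minus sign from the facet equation, so the two occurrences cancel exactly; this cancellation is the discrete statement of flux consistency and is what makes the scheme conservative. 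After cancellation, and using $\partial\Omega=\Gamma_N$, only the time-difference term, the Neumann data $\int_{\Gamma_N}h^{n+\theta}\cdot e_j\,\mathrm{d}s$, and the outflow advection term $\int_{\Gamma_N}(1-\lambda)(\bar u_h^n\cdot n)(\bar u_h^{n+\theta}\cdot e_j)\,\mathrm{d}s$ remain, which rearrange into the $e_j$-component of \cref{eq:momconsglob_t}.

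I do not anticipate a genuine obstacle: the argument is purely algebraic and uses neither \cref{prop:masscons_t} nor any stability estimate. The only things requiring care are the admissibility of the two constant test functions in their respective spaces, and the verification that the term carried by $\hat\sigma_h^{n+\theta}$ is literally identical in \cref{eq:mom_loc_simp_t,eq:mom_glob_simp_t} so that it cancels rather than merely recombining. The one piece of bookkeeping to track is the time labelling in the outflow term, where the convective factor is frozen at $t^n$ as $\bar u_h^n\cdot n$ while the transported quantity is the midpoint value $\bar u_h^{n+\theta}$; this is exactly the combination appearing in \cref{eq:mom_glob_simp_t} and therefore passes unchanged into \cref{eq:momconsglob_t}.
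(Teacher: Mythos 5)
Your proposal is correct and is exactly the argument the paper intends: its proof of \cref{prop:momcons_t} is simply omitted as ``similar to that of \cref{prop:momcons}'', and you have carried out precisely that transcription—testing \cref{eq:mom_loc_simp_t} with $v_h = e_j$ supported on a single cell for \cref{eq:momconsloc_t}, then pairing $v_h = e_j$ globally with $\bar{v}_h = -e_j$ in \cref{eq:mom_glob_simp_t} so the $\hat{\sigma}_h^{n+\theta}$ facet terms cancel for \cref{eq:momconsglob_t}. Your added observations (why $\Gamma_D = \emptyset$ is needed for admissibility of $\bar{v}_h = -e_j$, and the frozen-in-time convective factor $\bar{u}_h^{n}\cdot n$ passing through unchanged) are accurate and faithful to the paper's semi-discrete proof.
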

\begin{proof}
  The proof is similar to that of \cref{prop:momcons} and therefore
  omitted. \qed
\end{proof}

\begin{proposition}[fully-discrete energy stability]
  \label{prop:globenerStab_t}
  If
  $u_{h}^{n}, \bar{u}_{h}^{n}, p_{h}^{n}, \bar{p}_{h}^{n} \in V_h
  \times \bar{V}_h \times Q_h \times \bar{Q}_h$
  and
  $u_{h}^{n+1}, \bar{u}_{h}^{n+1}, p_{h}^{n+1}, \bar{p}_{h}^{n+1} \in
  V_h \times \bar{V}_h \times Q_h \times \bar{Q}_h$
  satisfy \cref{eq:wf_simp_t}, then with homogeneous boundary
  conditions, no forcing terms, for suitably large $\alpha$, and
  $\theta \ge 1/2$,
  \begin{equation}
    \label{eq:globenerStab_t}
    \sum_K \int_K \envert{u_{h}^{n+1}}^2 \dif x
    \le \sum_K \int_K \envert{u_{h}^{n}}^2 \dif x.
  \end{equation}
\end{proposition}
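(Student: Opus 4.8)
The plan is to mirror the proof of the semi-discrete result, \cref{prop:globenerStab}, working throughout at the midpoint time level $n+\theta$ and absorbing the time-discretization contribution into a sign-definite term controlled by the hypothesis $\theta \ge 1/2$. First I would establish mass conservation at the midpoint. By \cref{prop:masscons_t} applied at \emph{both} time levels---the field $u_h^n$ satisfies the constraints as the solution of the previous step, the initial datum being solenoidal---both $u_h^n$ and $u_h^{n+1}$ are pointwise divergence-free, have vanishing normal jumps on interior facets, and satisfy $u_h\cdot n = \bar u_h\cdot n$ on boundary facets. Since $u_h^{n+\theta} = (1-\theta)u_h^n + \theta u_h^{n+1}$ and $\bar u_h^{n+\theta}$ are affine combinations and all these constraints are linear, the midpoint fields inherit them: $\nabla\cdot u_h^{n+\theta} = 0$ on each $K$, $\jump{u_h^{n+\theta}} = 0$ on $\mathcal F_I$, and $u_h^{n+\theta}\cdot n = \bar u_h^{n+\theta}\cdot n$ on $\partial\Omega$.

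Next, in \cref{eq:wf_simp_t} I would choose $v_h = u_h^{n+\theta}$ and $\bar v_h = -\bar u_h^{n+\theta}$ in the momentum equations, test the midpoint mass constraints with $q_h = -p_h^{n+\theta}$ and $\bar q_h = -\bar p_h^{n+\theta}$, and sum, setting $f^{n+\theta}=0$, $h^{n+\theta}=0$ and homogeneous boundary data. The local pressure contribution vanishes directly because $\nabla\cdot u_h^{n+\theta} = 0$, while the facet pressure contribution cancels against the midpoint global mass constraint exactly as in \cref{prop:globenerStab}. The viscous volume, penalty and consistency terms are identical to those in \cref{eq:afterPresIntbyparts}, now evaluated at $n+\theta$, and by the coercivity estimate of \cite[Lemma~5.2]{Wells:2011} and \cite[Lemma~4.2]{Rhebergen:2017} their sum is non-negative for suitably large~$\alpha$. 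For the advective terms I would use that the \emph{convecting} field is $u_h^n$, which is divergence-free, to rewrite the volume term $-\sum_K\int_K (u_h^{n+\theta}\otimes u_h^n):\nabla u_h^{n+\theta}\dif x$ as $-\tfrac12\sum_K\int_{\partial K}\del{u_h^n\cdot n}\envert{u_h^{n+\theta}}^2\dif s$; combining this with the upwind boundary fluxes through $\lambda u_h^n\cdot n = \del{u_h^n\cdot n - \envert{u_h^n\cdot n}}/2$ and invoking the midpoint normal continuity reproduces, verbatim up to the time labels, the passage from \cref{eq:afterPresIntbyparts} to \cref{eq:afterPresIntbyparts_simp}, leaving only the non-negative upwind dissipation $\tfrac12\sum_K\int_{\partial K}\envert{u_h^n\cdot n}\envert{u_h^{n+\theta}-\bar u_h^{n+\theta}}^2\dif s$ together with a non-negative boundary term. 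I denote the sum of all these non-negative viscous and advective contributions by $D^{n+\theta}\ge 0$.

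The one genuinely new ingredient---and the only real obstacle---is the treatment of the discrete time derivative $\int_\Omega \tfrac{u_h^{n+1}-u_h^n}{\Delta t^n}\cdot u_h^{n+\theta}\dif x$. Here I would use the pointwise algebraic identity
\[
\del{u_h^{n+1}-u_h^n}\cdot u_h^{n+\theta} = \tfrac12\envert{u_h^{n+1}}^2 - \tfrac12\envert{u_h^n}^2 + \left(\theta - \tfrac12\right)\envert{u_h^{n+1}-u_h^n}^2,
\]
which follows by expanding $u_h^{n+\theta} = (1-\theta)u_h^n + \theta u_h^{n+1}$. Substituting it into the summed identity, for which the right-hand side equals $-D^{n+\theta}$, gives
\[
\frac{1}{2\Delta t^n}\sum_K\int_K\del{\envert{u_h^{n+1}}^2 - \envert{u_h^n}^2}\dif x = -D^{n+\theta} - \frac{\theta - \tfrac12}{\Delta t^n}\sum_K\int_K\envert{u_h^{n+1}-u_h^n}^2\dif x.
\]
For $\theta \ge 1/2$ the final term is non-positive and $D^{n+\theta}\ge0$, so the right-hand side is non-positive; multiplying by $2\Delta t^n > 0$ then yields \cref{eq:globenerStab_t}. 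The role of the hypothesis $\theta \ge 1/2$ is precisely to give the numerical-dissipation term arising from the time stepping the correct sign, in direct analogy with the continuous-in-time estimate of \cref{prop:globenerStab}.
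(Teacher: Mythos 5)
Your proposal is correct and follows essentially the same route as the paper's proof: the same test functions $v_h = u_h^{n+\theta}$, $\bar v_h = -\bar u_h^{n+\theta}$, the same reduction of the advective and viscous terms to the form of \cref{eq:afterPresIntbyparts_simp_t} using $\nabla \cdot u_h^n = 0$ and the coercivity result of \cite[Lemma~5.2]{Wells:2011} and \cite[Lemma~4.2]{Rhebergen:2017}, and the identical algebraic identity $\del{u_h^{n+1}-u_h^n} \cdot u_h^{n+\theta} = \tfrac12 \envert{u_h^{n+1}}^2 - \tfrac12 \envert{u_h^n}^2 + \del{\theta - \tfrac12} \envert{u_h^{n+1}-u_h^n}^2$ to extract the sign-definite time-stepping dissipation. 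The only (cosmetic) difference is that you test an explicitly formed midpoint mass constraint with $-p_h^{n+\theta}$, $-\bar p_h^{n+\theta}$, whereas the paper tests the level-$(n+1)$ constraints with the $\theta$-scaled pressures $-\theta p_h^{n+\theta}$, $-\theta \bar p_h^{n+\theta}$ and invokes \cref{prop:masscons_t} at level $n$ separately --- the same computation, packaged slightly differently, with your version having the small merit of making explicit that $u_h^n$ must itself satisfy the discrete mass constraints (previous step, or a compatible initial datum).
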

\begin{proof}
  Setting $q_h = -\theta p_{h}^{n+\theta}$, $\bar{q}_h =
  -\theta\bar{p}_{h}^{n + \theta}$, $v_h = u _{h}^{n + \theta}$ and
  $\bar{v}_h = -\bar{u}_{h}^{n + \theta}$, in
  \cref{eq:mass_loc_simp_t,eq:mass_glob_simp_t,eq:mom_loc_simp_t,eq:mom_glob_simp_t},
  adding the results, using the expressions for the diffusive fluxes,
  given by \cref{eq:momentumflux,eq:numflux_sigmad}, partial
  integration of the pressure gradient terms and using that $\nabla
  \cdot u_{h}^{n} = 0$ by \cref{prop:masscons_t}, we obtain, using the
  same steps as in the proof of \cref{prop:globenerStab},
  \begin{multline}
    \label{eq:afterPresIntbyparts_simp_t}
    \int_{\Omega} \frac{u_{h}^{n+1} - u_{h}^{n}}{\Delta t^n} \cdot u_{h}^{n+\theta} \dif x
    + \sum_K \frac{1}{2} \int_{\partial K} \envert{u_{h}^{n} \cdot n} \envert{u_{h}^{n+\theta} - \bar{u}_{h}^{n+\theta}}^2 \dif s
    \\
    +\sum_K \int_K \nu \envert{\nabla u_{h}^{n+\theta}}^2 \dif x
    +\sum_K \int_{\partial K} \frac{\nu \alpha}{h_K} \envert{\bar{u}_{h}^{n+\theta} - u_{h}^{n+\theta}}^2 \dif s
    \\
    + 2\sum_K \int_{\partial K} \nu \del{\nabla u_{h}^{n+\theta} \cdot n}\del{\bar{u}_{h}^{n+\theta} - u_{h}^{n+\theta}} \dif s
\\
    + \frac{1}{2} \int_{\Gamma_N} \envert{\bar{u}_{h}^{n} \cdot n} \envert{ \bar{u}_{h}^{n+\theta}}^2 \dif s
    = 0.
  \end{multline}
  The first term on the left-hand side of
  \cref{eq:afterPresIntbyparts_simp_t} can be reformulated as
  \begin{multline}
    \int_{\Omega} \frac{u_{h}^{n+1} - u_{h}^{n}}{\Delta t^n} \cdot u_{h}^{n+\theta} \dif x =
    \del{\theta - \frac{1}{2}} \int_{\Omega} \frac{\envert{u_{h}^{n+1} - u_{h}^{n}}^2}{\Delta t^n} \dif x
    \\
    + \frac{1}{2} \int_{\Omega} \frac{\envert{u_{h}^{n+1}}^2}{\Delta t^n} \dif x
    - \frac{1}{2} \int_{\Omega} \frac{\envert{u_{h}^{n}}^2}{\Delta t^n} \dif x.
  \end{multline}
  Inserting this expression into \cref{eq:afterPresIntbyparts_simp_t}:
  \begin{multline}
    \label{eq:afterPresIntbypartsDiscrete_simp_tf}
    \frac{1}{2} \int_{\Omega} \frac{\envert{u_{h}^{n+1}}^2}{\Delta t^n} \dif x
    - \frac{1}{2} \int_{\Omega} \frac{\envert{u_{h}^{n}}^2}{\Delta t^n} \dif x
    =
    -\del{\theta - \frac{1}{2}}\int_{\Omega} \frac{\envert{u_{h}^{n+1} - u_{h}^{n}}^2}{\Delta t^n} \dif x
    \\
    -\sum_K \frac{1}{2} \int_{\partial K} \envert{u_{h}^{n} \cdot n} \envert{u_{h}^{n+\theta} - \bar{u}_{h}^{n+\theta}}^2 \dif s
\\
    - 2\sum_K \int_{\partial K} \nu \del{\nabla u_{h}^{n+\theta} \cdot n}\del{\bar{u}_{h}^{n+\theta} - u_{h}^{n+\theta}} \dif s
    \\
    -\sum_K \int_{\partial K} \frac{\nu \alpha}{h_K} \envert{\bar{u}_{h}^{n+\theta} - u_{h}^{n+\theta}}^2\dif s
\\
    - \frac{1}{2} \int_{\Gamma_N} \envert{\bar{u}_{h}^{n} \cdot n} \envert{ \bar{u}_{h}^{n+\theta} }^2 \dif s
    - \sum_K \int_K \nu \envert{\nabla u_{h}^{n+\theta}}^2 \dif x.
  \end{multline}
  As in \cref{prop:globenerStab}, there exists an $\alpha > 0$,
  independent of $h_K$, such that the right hand side of
  \cref{eq:afterPresIntbypartsDiscrete_simp_tf} is non-positive. The
  result follows. \qed
\end{proof}

\section{Numerical examples}
\label{sec:compres}

We now demonstrate the performance of the method for a selection of
numerical examples, paying close attention to mass and momentum
conservation, and energy stability.

For all stationary examples considered, exact solutions are known.
For the stationary examples we use a fixed-point iteration with
stopping criterion $|e_p^{i+1} - e_p^{i}|/(e_p^{i+1} + e_p^{i}) \le
{\rm TOL}$, where $e_p^i$ is the pressure error in the $L^2$ norm at
the $i$th iterate, and ${\rm TOL}$ is a given tolerance that we set
to~$10^{-4}$.  All unsteady examples use~$\theta = 1$.  In all
examples we set the penalty parameter to be~$\alpha = 6k^2$.

In the implementation we apply cell-wise static condensation such that
only the degrees-of-freedom associated with the facet spaces appear in
the global system. Compared to standard discontinuous Galerkin
methods, this significantly reduces the size of the global system.  We
could eliminate the facet pressure field and use a BDM element,
see~\citep{Lehrenfeld:2016}, and the BDM normal velocity in place
of~$\bar{u}_{h} \cdot n$. However, we feel that handling all fields in
a hybridized framework offers some simplicity.

Examples have been implemented using the NGSolve finite element
library~\citep{Schoberl:2014}.  All examples use unstructured
simplicial meshes.

\subsection{Kovasznay flow}
\label{ss:tc_kovasznay}

We consider the steady, two-dimensional analytical solution of the
Navier--Stokes equations from Kovasznay~\citep{Kovasznay:1948} on a
domain $\Omega = \del{-0.5, 1} \times \del{-0.5, 1.5}$.  For a
Reynolds number $\mathit{Re}$, let the viscosity be given by $\nu = 1
/ \mathit{Re}$. The solution to the Kovasznay problem is:
\begin{subequations}
  \label{eq:kovasznay}
  \begin{align}
    u_x &= 1 - e^{\lambda x_{1}} \cos(2 \pi x_{2}), \quad
    \\
    u_y &= \frac{\lambda}{2 \pi} e^{\lambda x_{1}} \sin(2\pi x_{2}), \quad
    \\
    p   &= \frac{1}{2} \del{1 - e^{2\lambda x_{1}}} + C,
  \end{align}
\end{subequations}
where $C$ is an arbitrary constant, and where
\begin{equation}
  \lambda
  = \frac{\mathit{Re}}{2} - \del{\frac{\mathit{Re}^2}{4} + 4 \pi^2}^{1/2}
\end{equation}
We choose $C$ such that the mean pressure on $\Omega$ is zero. The
Kovasznay flow solution in \cref{eq:kovasznay} is used to set
Dirichlet boundary conditions for the velocity on~$\partial \Omega$.

The $L^2$-error and rates of convergence are presented in
\cref{tab:kovasznay} for $\mathit{Re} = 40$ using a series of refined
meshes. Optimal rates of convergence are observed for both the
velocity field (order $k + 1$) and pressure field (order $k$). The
divergence of the approximate velocity field is of machine precision
in all cases.

\begin{table}
  \caption{Computed velocity, pressure and divergence errors in the
    $L^2$ norm for the HDG method applied to the Kovasznay
    problem.}
  \label{tab:kovasznay}
  \centering {
    \begin{tabular}{c|ccccc}
      \hline
      & \multicolumn{5}{c}{$k=2$} \\
      Cells
      & $\norm{u_h - u}$ & rate & $\norm{p_h - p}$ & rate & $\norm{\nabla \cdot u_h}$ \\
      \hline
      64   & 1.8e-2 & -   & 1.6e-2 & -   & 3.8e-14 \\
      256  & 2.2e-3 & 3.0 & 4.0e-3 & 2.0 & 6.7e-14 \\
      1024 & 2.8e-4 & 3.0 & 9.8e-4 & 2.0 & 1.3e-13 \\
      4096 & 3.5e-5 & 3.0 & 2.4e-4 & 2.0 & 2.5e-13 \\
      \hline
      \multicolumn{6}{c}{} \\
      \hline
      & \multicolumn{5}{c}{$k=3$} \\
      Cells
      & $\norm{u_h - u}$ & rate & $\norm{p_h - p}$ & rate & $\norm{\nabla \cdot u_h}$ \\
      \hline
      64   & 1.4e-3 & -   & 2.0e-3 & -   & 1.9e-13 \\
      256  & 9.4e-5 & 3.9 & 2.0e-4 & 3.3 & 6.1e-13 \\
      1024 & 5.8e-6 & 4.0 & 2.3e-5 & 3.1 & 7.8e-13 \\
      4096 & 3.6e-7 & 4.0 & 2.8e-6 & 3.1 & 1.6e-12 \\
      \hline
    \end{tabular}
  }
\end{table}

\subsection{Position-dependent Coriolis force}
\label{ss:tc_coriolis}

We now consider the test case from~\cite[Section~3.2]{Linke:2016a}. In
particular, we consider on the unit square $(0, 1) \times (0, 1)$ the
steady Navier--Stokes equations augmented with a position-dependent
Coriolis force: $\nabla \cdot \sigma + 2C \times u = 0$ and $\nabla
\cdot u = 0$, where we set $2 C \times u = - 2 x_{2} (-u_2, u_1)$. On
boundaries we set $u = (1, 0)$. The exact solution to this problem is
given by $p = x_{2}^2 - 1/3$ and $u = (1, 0)$.

It was shown in \citep{Linke:2016a} that the Scott--Vogelius finite
element, in which the velocity is approximated in divergence-free
function spaces, is able to produce the exact velocity field while the
velocity computed using a Taylor--Hood finite element method is
polluted by the pressure error, in part due to the approximate
velocity field not being exactly divergence-free. Furthermore, it is
shown in~\citep{Linke:2016a} that as $\nu \to 0$, the velocity error
increases for the Taylor--Hood finite element method.

In \cref{tab:linke_jcp_2} we show the results obtained using the HDG
method presented in \cref{sec:hdg} for~$k = 2$. It shows the computed
error in the $L^2$ norm for the velocity, pressure and divergence
errors.  Errors in the velocity and velocity divergence are of machine
precision, regardless of~$\nu$. The HDG method therefore obtains the
same quality of solution as produced using the Scott--Vogelius finite
element in~\citep{Linke:2016a}.  We do not consider the $k = 3$ case
because for this discretization the pressure is approximated by
quadratic polynomials and so the pressure error is also of machine
precision.

\begin{table}
  \caption{Computed errors in the $L^2$ norm for the HDG method with
    the position-dependent Coriolis forcing term with different
    viscosity values. Note that the pressure error does not depend on
    the viscosity.}
  \label{tab:linke_jcp_2}
  \centering {\small
    \begin{tabular}{c|cccc|cccc}
      \hline
      & \multicolumn{4}{|c}{$\nu=0.001$, $k=2$}
      & \multicolumn{4}{|c}{$\nu=1$, $k=2$} \\
      Cells
      & $\norm{u_h - u}$ & $\norm{\nabla \cdot u_h}$ & $\norm{p_h - p}$ & rate
                                                     & $\norm{u_h - u}$ & $\norm{\nabla \cdot u_h}$ & $\norm{p_h - p}$ & rate
      \\
      \hline
      64   & 3.7e-15 & 4.6e-14 & 9.0e-4 & -   & 1.3e-14 & 4.6e-14 & 9.0e-4 & -   \\
      256  & 5.1e-15 & 9.3e-14 & 2.3e-4 & 2.0 & 9.9e-15 & 9.2e-14 & 2.3e-4 & 2.0 \\
      1024 & 6.4e-15 & 1.8e-13 & 5.6e-5 & 2.0 & 7.8e-14 & 1.9e-13 & 5.6e-5 & 2.0 \\
      4096 & 2.2e-14 & 6.3e-13 & 1.4e-5 & 2.0 & 2.3e-13 & 3.7e-13 & 1.4e-5 & 2.0 \\
      \hline
    \end{tabular}
  }
\end{table}

\subsection{Pressure-robustness}
\label{ss:pressurerobust}

We next demonstrate that our method is pressure-robust and compare the
results with those obtained using the method
of~\citep{Labeur:2012}. For this we use a test case proposed
in~\citep[Section 6.1]{Lederer:2017}. On the unit square
$(0,1)\times(0,1)$ we consider the steady Navier--Stokes equations
where the boundary conditions and source terms are such that the exact
solution is given by $u = \mathrm{curl} \zeta$, with $\zeta = x_{1}^2
(x_{1} - 1)^2 x_{2}^2 (x_{2} - 1)^2$ and $p = x_{1}^7 + x_{2}^7 -
1/4$. We choose $k = 3$ and vary the viscosity~$\nu$.

For a mixed velocity-pressure approximation, it can be proven that the
method of~\citep{Labeur:2012} results in an approximate velocity field
that is pointwise divergence free, but not $H({\rm div})$-conforming.
As such, the method of~\citep{Labeur:2012} cannot be shown to be
pressure-robust. This is confirmed by the results presented in
\cref{tab:pressure_robust}. For the method of~\citep{Labeur:2012} it
is observed that the error in the velocity field depends on
$\nu^{-1}\norm{p_h - p}$, while the proposed method is pressure
robust. The velocity error does not change with viscosity in
\cref{tab:pressure_robust}.

\begin{table}
  \caption{Computed errors in the $L^2$ norm for the HDG method with
    different viscosity values. A comparison of the proposed method
    with the method of~\citep{Labeur:2012}. Note that the errors in
    the velocity for the proposed method do not depend on $\nu$, in
    contrast with the method of~\citep{Labeur:2012}.}
  \label{tab:pressure_robust}
  \centering {
    \begin{tabular}{c|ccccccc}
      \hline
      & \multicolumn{7}{c}{Proposed Method: $\nu = 0.001$ } \\[1ex]
      Cells
      & $\norm{u_h - u}$ & rate & $\norm{\nabla(u_h - u)}$ & rate & $\norm{p_h - p}$ & rate & $\norm{\nabla \cdot u_h}$ \\
      \hline
      128  & 4.2e-6  &     & 2.8e-4 &     & 4.5e-4 &     & 1.5e-14 \\
      512  & 2.4e-7  & 4.1 & 3.4e-5 & 3.0 & 5.7e-5 & 3.0 & 4.3e-14 \\
      2048 & 1.4e-8  & 4.1 & 4.2e-6 & 3.0 & 7.2e-6 & 3.0 & 2.3e-14 \\
      8192 & 8.5e-10 & 4.1 & 5.2e-7 & 3.0 & 9.0e-7 & 3.0 & 2.4e-14 \\
      \hline
      \multicolumn{8}{c}{} \\
      \hline
      & \multicolumn{7}{c}{Proposed Method: $\nu=1$} \\[1ex]
      Cells
      & $\norm{u_h - u}$ & rate & $\norm{\nabla(u_h - u)}$ & rate & $\norm{p_h - p}$ & rate & $\norm{\nabla \cdot u_h}$ \\
      \hline
      128  & 4.2e-6  & -   & 2.8e-4 & -   & 6.5e-4 & -   & 2.2e-15 \\
      512  & 2.4e-7  & 4.1 & 3.4e-5 & 3.0 & 7.6e-5 & 3.1 & 4.5e-15 \\
      2048 & 1.4e-8  & 4.1 & 4.2e-6 & 3.0 & 9.1e-6 & 3.1 & 8.8e-15 \\
      8192 & 8.5e-10 & 4.1 & 5.2e-7 & 3.0 & 1.1e-6 & 3.0 & 1.8e-14 \\
      \hline
      \multicolumn{8}{c}{} \\[1em]
      \hline
      & \multicolumn{7}{c}{Method of~\citep{Labeur:2012}: $\nu=0.001$} \\[1ex]
      Cells
      & $\norm{u_h - u}$ & rate & $\norm{\nabla(u_h - u)}$ & rate & $\norm{p_h - p}$ & rate & $\norm{\nabla \cdot u_h}$ \\
      \hline
      128  & 6.4e-4 & -   & 5.3e-2 & -   & 4.6e-4 & -   & 2.9e-14 \\
      512  & 4.1e-5 & 4.0 & 6.7e-3 & 3.0 & 5.8e-5 & 3.0 & 4.8e-14 \\
      2048 & 2.6e-6 & 4.0 & 8.4e-4 & 3.0 & 7.2e-6 & 3.0 & 1.7e-14 \\
      8192 & 1.6e-7 & 4.0 & 1.1e-4 & 3.0 & 9.0e-7 & 3.0 & 2.3e-14 \\
      \hline
      \multicolumn{8}{c}{} \\[1ex]
      \hline
      & \multicolumn{7}{c}{Method of~\citep{Labeur:2012}: $\nu=1$} \\[1ex]
      Cells
      & $\norm{u_h - u}$ & rate & $\norm{\nabla(u_h - u)}$ & rate & $\norm{p_h - p}$ & rate & $\norm{\nabla \cdot u_h}$ \\
      \hline
      128  & 3.9e-6  & -   & 2.8e-4 & -   & 6.2e-4 & -   & 2.2e-15 \\
      512  & 2.3e-7  & 4.1 & 3.4e-5 & 3.0 & 7.3e-5 & 3.1 & 4.4e-15 \\
      2048 & 1.4e-8  & 4.1 & 4.2e-6 & 3.0 & 8.8e-6 & 3.1 & 8.9e-15 \\
      8192 & 8.3e-10 & 4.0 & 5.2e-7 & 3.0 & 1.1e-6 & 3.0 & 1.8e-14 \\
      \hline
    \end{tabular}
  }
\end{table}

\subsection{Transient higher-order potential flow}
\label{ss:tc_harmonic}

In this test, taken from \cite[Section~6.6]{Linke:2016b}, we solve the
time dependent Navier--Stokes equations \cref{eq:navsto} on the domain
$\Omega = [-1, 1]^2$. This test case studies the time-dependent exact
velocity $u(t) = \min(t, 1) \nabla \chi$ where $\chi$ is a smooth
harmonic potential given by $\chi = x_{1}^3 x_{2} - x_{2}^3
x_{1}$. The pressure gradient then satisfies $\nabla p = - \nabla
\envert{u}^2/2 - \partial_t \del{\min(t, 1) \nabla \chi}$.  We impose
the exact velocity solution as Dirichlet boundary condition on all
of~$\partial \Omega$.

For the simulations we used a grid with 2048 cells, set the time step
equal to~$\Delta t = 0.01$ and compute the solution on the time
interval $[0, 2]$. \Cref{fig:harmonic} shows the velocity and pressure
errors as a function of time. We used both $k = 2$ and $k = 3$, and
consider $\nu = 1/500$ and $\nu = 1/2000$. We observe that the error
in pressure and velocity is more or less the same regardless of~$\nu$.

Over the computational time interval, using $k = 2$ or $k = 3$ on a
mesh with 2048 cells, for either $\nu = 1/500$ and $\nu = 1/2000$, the
$L^2$-norm of the divergence reaches $1.4 \times 10^{-10}$ in one
point but is otherwise always of the order $10^{-11}$. The momentum
balance, in absolute value, never exceeds~$3.4 \times 10^{-12}$.

\begin{figure}
  \centering
  \subfloat[Velocity error.]{\includegraphics[width=0.49\textwidth]{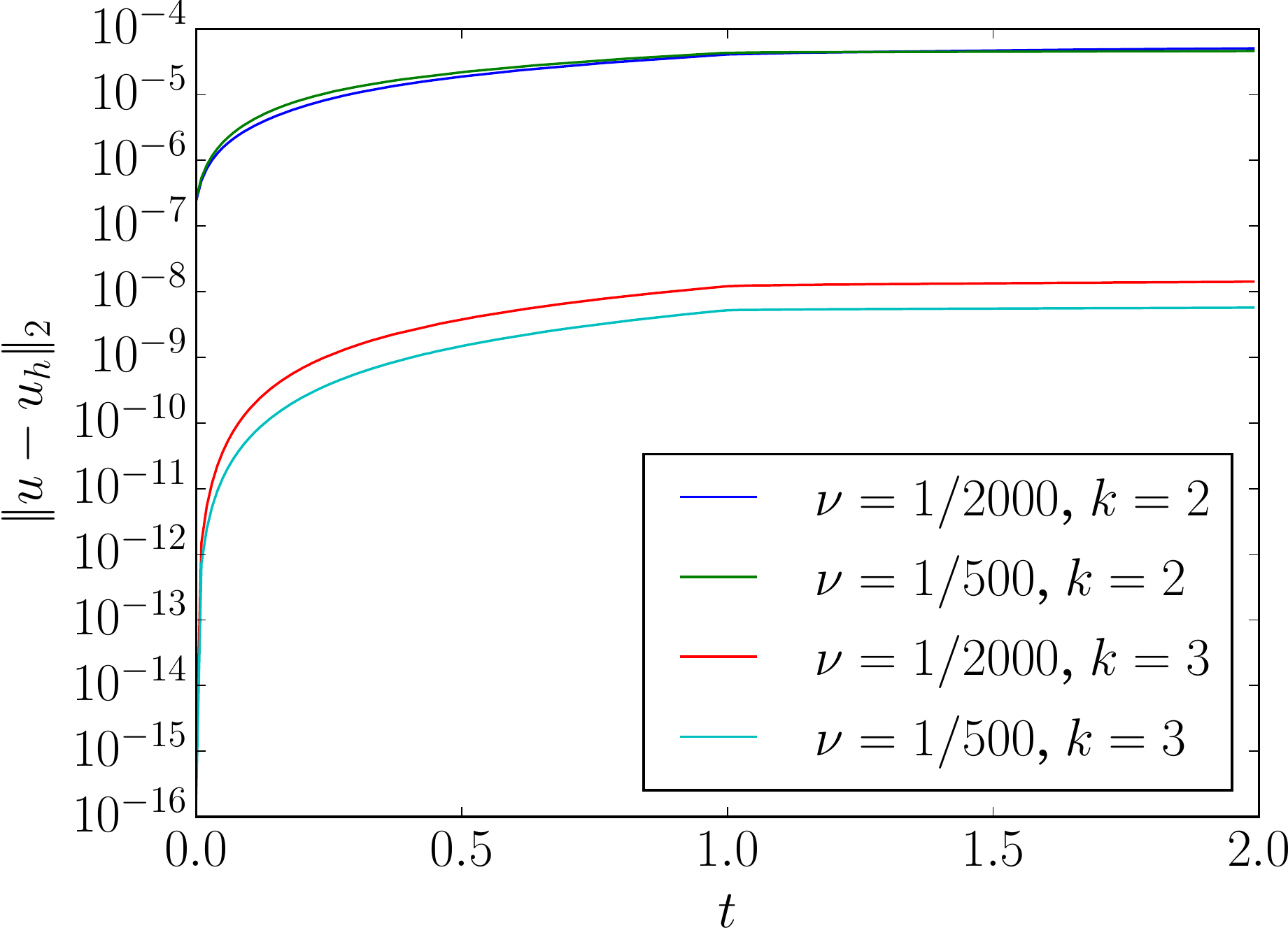}}
  \subfloat[Pressure error.]{\includegraphics[width=0.49\textwidth]{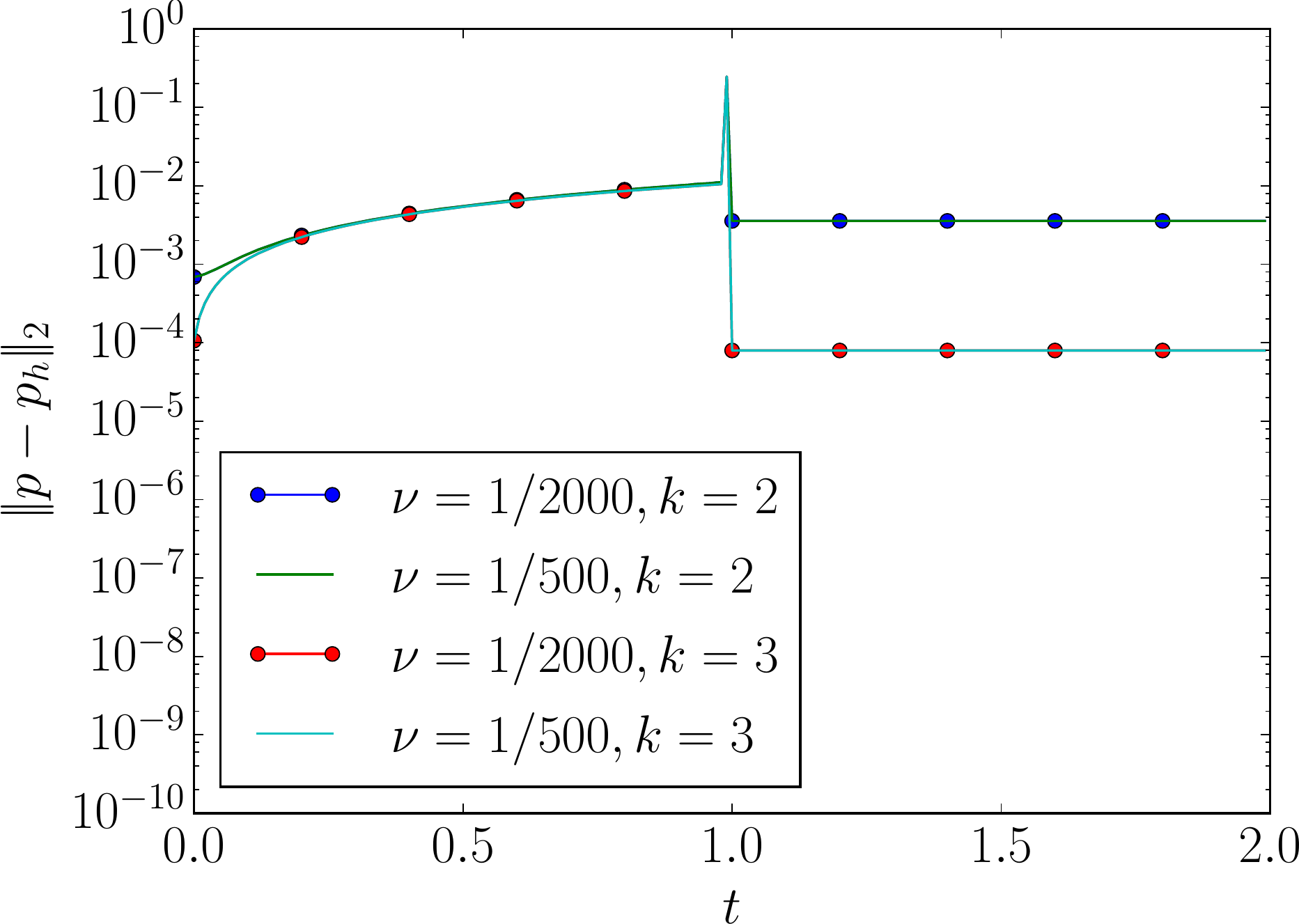}}
  \caption{Velocity and pressure errors in the $L^2$ norm for the
    transient higher-order potential flow test case. Approximations
    were obtained using $k = 2$ and $k = 3$ on a mesh with 2048
    cells.}
  \label{fig:harmonic}
\end{figure}

\subsection{Two-dimensional flow past a circular obstacle}
\label{ss:tc_two_dim_flow}

In this test case we consider flow past a circular obstacle (see
e.g.~\citep{Lehrenfeld:2016,Schaefer:1996}). The domain is a
rectangular channel, $[0, 2.2] \times [0, 0.41]$, with a circular
obstacle of radius $r = 0.05$ centered at~$(0.2, 0.2)$. On the inflow
boundary ($x_{1} = 0$) we prescribe the $x_{1}$-component of the
velocity to be $u_1 = 6 x_{2} (0.41 - x_{2})/0.41^2$. The
$x_{2}$-component of the velocity is prescribed as~$u_2 = 0$.
Homogeneous Dirichlet boundary conditions are applied on the walls
($x_{2} = 0$ and $x_{2} = 0.41$), and on the obstacle. On the outflow
boundary ($x_{1} = 2.2$) we prescribe $\sigma_d \cdot n = 0$. The
viscosity is set as $\nu = 10^{-3}$.  We choose $k = 3$ and set
$\Delta t = 5 \times 10^{-5}$ so that the spatial discretization error
dominates the temporal discretization error. For the initial
condition, we impose the steady Stokes solution of this problem. The
mesh of the domain has 6784 cells and we consider the time
interval~$[0, 5]$.

At each time step we compute the drag and lift coefficients, which are
defined as
\begin{equation}
  \label{eq:dragLift}
  C_D = -\frac{1}{r}\int_{\Gamma_c} \del{\sigma_d \cdot n} \cdot e_1 \dif s,
  \qquad
  C_L = -\frac{1}{r}\int_{\Gamma_c} \del{\sigma_d \cdot n} \cdot e_2 \dif s,
\end{equation}
where $e_{1}$ and $e_{2}$ are unit vectors in the $x_{1}$ and $x_{2}$
directions, respectively, and $\Gamma_C$ is the surface of the
circular object. We compute a maximum drag coefficient of $C_D =
3.23232$ and minimum drag coefficient of $C_D = 3.16583$.  The maximum
and minimum lift coefficients we compute are, respectively, $C_L =
0.98251$ and $C_L = -1.02246$. These are comparable to those found in
literature~\citep{Lehrenfeld:2016,Schaefer:1996}.  The velocity
magnitude at $t = 5$ is shown \cref{fig:velocitymagnitude2D}.
\begin{figure}
  \centering
  \includegraphics[width=0.98\textwidth]{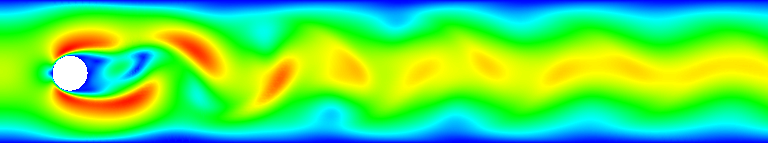}
  \caption{Two-dimensional flow past a cylinder test case: velocity
    magnitude past a two-dimensional circular object in a channel at
    $t = 5$. Approximations were obtained using $k = 3$ on a mesh with
    6784 cells.}
  \label{fig:velocitymagnitude2D}
\end{figure}

\subsection{Three-dimensional flow past a cylinder}
\label{ss:tc_three_dim_flow}

In this test case we consider three-dimensional flow past a cylinder
(see e.g.~\citep{Lehrenfeld:2016,Schaefer:1996}) with a time dependent
inflow velocity. The domain is a cuboid shaped channel $[0, 2.5]
\times [0, 0.41] \times [0, 0.41]$ with a cylinder of radius $r_{\rm
  cyl} = 0.05$ around the $x_{3}$-axis centered at $(x_{1}, x_{2}) =
(0.5, 0.2)$. On the inflow boundary ($x_{1} = 0$) we prescribe the
$x_{1}$-component of the velocity to be $u_1 = 36 \sin(\pi t/ 8) x_{2}
x_{3}(0.41 - x_{2})(0.41 - x_{3})/0.41^4$.  The $x_{2}$- and
$x_{3}$-components of the velocity are prescribed as $u_2 = 0$
and~$u_{3} = 0$. We impose homogeneous Dirichlet boundary conditions
on the walls ($x_{2} = 0$, $x_{2} = 0.41$, $x_{3} = 0$ and $x_{3} =
0.41$) and on the cylinder. On the outflow boundary ($x_{1} = 2.5$) we
prescribe $\sigma_d \cdot n = 0$.  The viscosity is set as~$\nu =
10^{-3}$.

We choose $k = 3$ and set $\Delta t = 5 \times 10^{-4}$ so that the
spatial discretization error dominates the temporal discretization
error. The initial condition is the Stokes solution to this
problem. The mesh has 4091 cells and we compute on the time
interval~$[0, 8]$. At each time step we compute the drag and lift
coefficients, defined by \cref{eq:dragLift}, where $r = 0.41 r_{\rm
  cyl}$ and $\Gamma_C$ is the surface of the cylinder. We compute
maximum drag and lift coefficients of $C_D = 2.98815$ and $C_L =
0.00348$, respectively. Compared to \citet{Schaefer:1996}, in which
the maximum drag and lift coefficients lie in the intervals $C_D \in
[3.2000, 3.3000]$ and $C_L \in [0.0020, 0.0040]$, we slightly
under-predict the drag coefficient, but the lift coefficient lies
within the same interval. \Cref{fig:velocitymagnitude3D} shows the
velocity magnitude at~$t = 4$.
\begin{figure}
  \centering
  \includegraphics[width=0.9\textwidth]{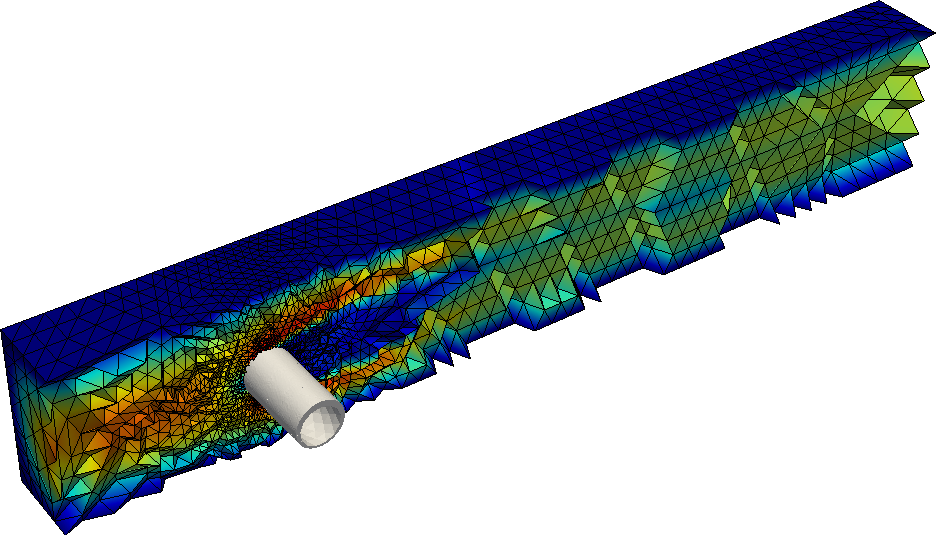}
  \caption{Three-dimensional flow past a cylinder test: slice through
    a 3D channel showing the 3D velocity magnitude past a cylinder in
    a channel at~$t = 4$. Approximations were obtained using $k = 3$
    on a mesh with 4091 cells.}
  \label{fig:velocitymagnitude3D}
\end{figure}

\section{Conclusions}
\label{sec:conclusions}

We have introduced a formulation of a hybridizable discontinuous
Galerkin method for the incompressible Navier--Stokes equations that
computes velocity fields that are pointwise divergence-free. The
construction of solenoidal velocity fields does not require
post-processing or the use of finite dimensional spaces of
divergence-free functions. The pointwise satisfaction of the
continuity equation and the continuity of the normal component of the
velocity field across cell facets allows us to prove that the method
conserves momentum locally (cell-wise) and is energy stable.  This is
in contrast with the closely related method in \citet{Labeur:2012}
which when satisfying the continuity equation pointwise can satisfy
local momentum conservation or global energy stability, but not both
simultaneously. The analysis that we present is supported by a range
of numerical examples in two and three dimensions.

\bibliographystyle{spbasic}
\bibliography{references}
\end{document}